\newcommand{\labell}[1] {\label{#1}}
\newcommand{\NN}{{\mathbb N}}
\newcommand{\less}{{\smallsetminus}}
\newcommand{\bla}{{\bigl\langle}}
\newcommand{\bra}{{\bigl\rangle}}
\newcommand{\un}{\underline}
\newcommand{\p}{{\partial}}
\newcommand{\al}{{\alpha}}
\newcommand{\be}{{\beta}}
\newcommand{\Om}{{\Omega}}
\newcommand{\om}{{\omega}}
\newcommand{\eps}{{\varepsilon}}
\newcommand{\de}{{\delta}}
\newcommand{\ga}{{\gamma}}
\newcommand{\Ga}{{\Gamma}}
\newcommand{\la}{{\lambda}}
\newcommand{\si}{{\sigma}}
\newcommand{\Si}{{\Sigma}}
\newcommand{\Vv}{{\mathcal V}}
\newcommand{\Ee}{{\mathcal E}}
\newcommand{\Ww}{{\mathcal W}}
\newcommand{\Oo}{{\mathcal O}}
\newcommand{\Mm}{{\mathcal M}}
\newcommand{\Ss}{{\mathcal S}}
\newcommand{\oMm}{{\overline {\Mm}}}
\newcommand{\ov}{\overline}
\newcommand{\PP}{{\mathbb P}}
\newcommand{\Q}{{\mathbb Q}}
\newcommand{\C}{{\mathbb C}}
\newcommand{\SSS}{{\smallskip}}
\newtheorem{theorem}{Theorem}[section]
\newtheorem{lemma}[theorem]{Lemma}
\newtheorem{prop}[theorem]{Proposition}
\newtheorem{example}[theorem]{Example}
\newtheorem{rmk}[theorem]{Remark}
\numberwithin{figure}{section}
\numberwithin{equation}{section}
\numberwithin{table}{section}
\newcommand{\MS}{{\medskip}}
\newcommand{\NI}{{\noindent}}
 \title{Comparing absolute and relative Gromov--Witten invariants}
 \author{Dusa McDuff}\thanks{partially supported by NSF grant DMS 0604769}
\address{Department of Mathematics,
Barnard College, Columbia University, 
NY 10027, USA}
\email{dusa@cpw.math.columbia.edu}
\keywords{relative Gromov--Witten invariants, Gromov--Witten invariants,
decomposition formula, symplectic sum}
\subjclass[2000]{53D45, 14N35}
\date{August 4, 2008}
\begin{document}

\begin{abstract} This  note
compares the usual (absolute) Gromov-Witten invariants of a symplectic manifold 
with the  invariants that count the curves relative to a (symplectic)
divisor $D$.   We give explicit 
examples where these invariants differ even though
it seems at first that they should agree, for example
when counting genus zero curves in a class 
$\be$ such that $\be\cdot D=0$.   
The main tool is the decomposition formula in the form
developed by A. Li--Ruan.   
\end{abstract}

\maketitle

\tableofcontents

\section{Introduction}

This is a largely expository paper about 
the An Li--Ruan version of relative 
Gromov--Witten invariants;
 cf. \cite{LR}.
These  have been used 
for a long time to help solve enumerative problems 
(cf. Ionel--Parker~\cite{IP}  for example)  but have only recently been much used in other areas of symplectic geometry.
For example, they are an essential ingredient in Hu--Li--Ruan's work 
in ~\cite{HLR} on the symplectic birational invariance of the uniruled property as well as in later papers 
\cite{HR,Lai,TJLR,Mcu} on related topics.
They are still rather little understood by symplectic geometers, even in the genus zero case.

\S1 describes the moduli spaces of stable maps that 
enter into their definition.  These involve curves on several levels as in many SFT moduli spaces. (Of course, relative Gromov--Witten invariants can be considered as a special case of SFT; cf. Bourgeois {\it et al.} \cite[Example~2.2]{BEH}.)  We do not attempt to do any analysis, but rather to explain the structure of these moduli spaces through examples.   \S2 and \S3 describe the decomposition formula and give several examples of its use.  

Here are our main results.  
\MS

\NI {\bf 1.}
We show in Remark~\ref{rmk:g0} that when the 
normal bundle of the divisor $D$ is suitably positive one can 
calculate genus zero Gromov--Witten invariants without involving 
higher level curves.  (This result  is greatly strengthened by Zinger in \cite{Z3}.)  However, in genus one this need not be the case; cf. Example \ref{ex:rel2}.\SSS

\NI {\bf 2.} Our second set of results concern the contributions to the relative invariants from curves in or near $D$.  More formally, these are the relative invariants of the pair $(Y,D^+)$, where  
$Y: =  \PP(L\oplus \C)$ is the projectivization of  
a complex line bundle $L\to D$ and 
$D^+: = \PP(L\oplus \{0\})$ is the copy of $D$ \lq\lq at infinity" with normal bundle $L^*$.  We denote by 
$D': = \PP(\{0\}\oplus \C)$ the copy of $D$ along the zero section
(with normal bundle $L$).
 
 Understanding the relative invariants of $(Y,D^+)$ is a crucial ingredient of the proofs of many results, such as the blow up formulas in \cite{HLR, Lai}.  As a first step, one needs to know that certain invariants vanish.  
 Lemma~\ref{le:relD} gives a version of this result for
relative invariants in class $\be\in H_2(Y)$ under the condition that
$\be\cdot D'\ge 0$ and the homological constraints are pulled back from $D$.  The argument is elementary, based on a  dimension count.
This argument fails when $\be\cdot D'<0$, but there were no explicit examples showing that the result may also not hold in this case.
Example \ref{ex:relD} gives such an example in the genus zero case.    
\SSS

\NI {\bf 3.}  Our first application of the decomposition formula is to give conditions under which the absolute and relative genus zero invariants agree for classes $\be$ such that $\be\cdot D=0$: see 
Proposition \ref{prop:g0}.  Its statement  is 
slightly stronger than other similar results in the literature (cf. \cite{HR, Lai})
 because  we do not make the usual positivity assumptions on the
 normal bundle of $D$.  However the argument is still based on a  dimension count.
 \SSS

\NI {\bf 4.}  Finally we give examples 
where the absolute and relative invariants differ for classes $\be$ such that $\be\cdot D=0$.  It is easy to find such examples in genus $>0$; see Example \ref{ex:g1}.  The genus zero example
is trickier and is given in Proposition~\ref{prop:=0}.

The reason why such examples exist is that in calculating the relative invariants one  uses almost complex structures $J$ and perturbing $1$-forms $\nu$ that respect the divisor $D$.   If $D$ is \lq\lq nonnegative" then a generic pair $(J,\nu)$ of this form should 
satisfy the regularity conditions for stable maps to $X$ as 
 as well as those for maps to $(X,D)$.  (This statement is made precise in Zinger \cite{Z3}.)
However, if $D$ is sufficiently \lq \lq negative" then 
this is not so, and there may be contributions to the relative invariant that are perturbed away in  the absolute invariant.

%
%


%

%

%


%


\MS

\NI {\bf Acknowledgements}  I wish to thank the many people who have helped me understand relative invariants, in particular T.-J. Li, Yongbin Ruan, Rahul Pandharipande, and Aleksey Zinger.




\section{Relative Gromov--Witten invariants}

We first explain the structure of the stable maps used 
to calculate the relative invariants, illustrating by many examples. Then we describe some elementary  vanishing results.

\subsection{Relative stable maps}

Consider a pair $(X,D)$ where $D$ is a divisor in $X$, i.e. 
 a codimension $2$ symplectic submanifold.
The relative invariants  count connected $J$-holomorphic curves in some $k$-fold prolongation $X_k$ of $X$. 
Here $J$ is an $\om$-tame almost complex structure on $X$ satisfying certain normalization conditions along $D$.  In particular $D$ is $J$-holomorphic, i.e. $J(TD)\subset TD$. The invariants are defined by first 
constructing a compact moduli space $\oMm: = \oMm\,\!^{X,D}_{\be,\un d}(J)$ of genus $g$ $J$-holomorphic
curves $C$ in class $\be\in H_2(X)$ as described below and then integrating the given constraints over the corresponding virtual cycle $\oMm\,\!^{[vir]}$. Here $\un d = (d_1,\dots,d_r)$ is a partition of $d: =
 \be\cdot D\ge 0$.  
 
 To a first approximation the moduli space consists of curves, i.e. equivalence classes $C=[\Si,u,\dots]$ of stable maps, that intersect the divisor $D$ at $r$ points with multiplicities $d_i$.  
More precisely, each such  curve has $k+1$ 
levels $C_i$ for some $k\ge 0$, the principal level 
$C_0$  in $X\less D$ and the 
higher levels $C_i$ (sometimes called bubbles) in  the $\C^*$-bundle
$L_D^*\less D$ where $L_D^*\to D$ is the dual of the normal bundle to $D$. 
We identify $L_D^*\less D$ with the complement of the sections $D_0,D_\infty$ of the ruled surface 
$$
\pi_Q: Q: = \PP(\C\oplus L_D^*)\to D,
$$
where the zero section $D_0: = \PP(\C\oplus\{0\})$ of $L_D^*$ has normal bundle $L_D^*$ and  the infinity 
section $D_\infty: = \PP(\{0\}\oplus L_D^*)$ has normal bundle $L_D$.
We then think of the original divisor $D\subset X$ as the infinity section $D_{0\infty}$ at level $0$. For each $i>0$
the  ends of $C$ along the zero section $D_{i0}$ of the components at  level $i$ match with those along $D_{i-1\,\infty}$ of the preceding level, and the relative constraints are put along the infinity section 
 $D_{k\infty}$ of  the last  copy of $L_D^*$.   
  
Each $C_i$ has three disjoint (and possibly empty) sets of marked points, 
the absolute marked points, the points $\{y_{j0}: 1\le j\le r_{i0}\}$ where it meets $D_{i0}$ and the points $\{y_{j\infty}: 1\le j\le r_{i\infty}\}$ where it meets $D_{i\infty}$.  (For this to make sense when $i=0$ take
$D_{00}$ to be empty.)
We require further that each component $C_i$   be stable, i.e. have a finite group of automorphisms.  For $C_0$ this has the usual meaning.  However, when 
  $i\ge 1$ we identify two level $i$ curves $C_i, C_i'$ if they lie in the same orbit of the fiberwise $\C^*$ action; i.e.  given representing maps $u_i:(\Si_i,j)\to Q$
 and 
$u_i':(\Si_i',j')\to Q$,  the curves are identified if there is $c\in \C^*$ and a holomorphic map $h:(\Si_i,j)\to (\Si_i',j')$ (preserving all marked points) such that $u_i\circ h=c \,u_i'$.\footnote
 {
Hence a level $C_i$ cannot consist only of multiply covered curves
$z\mapsto z^k$ in the fibers $\C^*$ of $L_D^*\less D$ because these are not stable.} 
Thus $L_D^*\less D$ should be thought of as a \lq\lq rubber" space; cf. \cite{MP}.

  Note that, although the whole curve is connected, 
the individual levels need not be but
should fit together to form a genus $g$ curve.  
The homology class $\be$ of such a curve is defined to be the sum of the homology class of its principal component with
the projections to $D$ of the classes of its higher levels.
\footnote
{
This is the class of the curve in $X$ obtained by gluing all the levels together.
When $k=0$ the curve only has one level and so the relative constraints lie along $D\subset X$.  These are the curves counted by Ionel--Parker~\cite{IP}. }  
When doing the analysis it is best to think that the domains and targets of the curves have cylindrical ends.  However, their indices
are the same as those of the corresponding compactified curves; see \cite[Prop~5.3]{LR}.  
 Thus the (complex) dimension of the moduli space $\oMm\,\!^{X,D}_{g,\be,k,(d_1,\dots,d_r)}$ of genus $g$ curves in class $\be$ with $k$ absolute marked points and $r$ relative ones is
\begin{equation}\labell{eq:dim}
 n(1-g)  + c_1^X(\be) + k+r + 3(g-1) - \sum_{i=1}^r(d_i-1)
 \end{equation}
Here 
we subtract $d_i-1$
at each relative intersection point of multiplicity $d_i$ 
 since, as far as a dimensional count is concerned, what is happened at such a point is that $d_i$ of the $d: = \be\cdot D$ intersection points of the $\be$-curve with $D$ coincide.

\begin{example}\labell{ex:rel}\rm (i)  Let $X = \PP^2\#{\ov \PP}\,\!^2$, the one point blow up of $\PP^2$ and set $D: = E$, the exceptional curve. Denote by $\pi:X\to \PP^1$ the projection, and fix another section $H$ of $\pi$ that is disjoint from $E$. Let $J$ be the usual complex structure and $\oMm\,\!^{X,E}_\la(J;p)$ be the moduli space of holomorphic lines through some point $p\in H$, where $\la=[H]$ 
is the class of a line.
Since $\la\cdot E=0$ there are no relative constraints.
Then $\oMm\,\!^{X,E}_\la(J;p)$ has complex dimension $1$ and should be diffeomorphic to $\PP^1$.  The closure of the ordinary moduli space 
of lines in $X$ though $p$ contains
all such lines together with one reducible curve consisting of the union of the exceptional divisor $E$ with the fiber $\pi^{-1}(\pi(p))$ through $p$.  But the elements of $\oMm\,\!^{X,E}_\la(J;p)$ do not contain components in $E$.  Instead, this component becomes a higher level curve lying in
$Q=\PP(\C\oplus \Oo(1))$ that intersects 
$E_0=\PP(\C\oplus \{0\})$ in the point $E_0\cap \pi^{-1}(\pi(p))$ and lies in the class $\la_Q$ of the line in $Q\cong X$.  Note that modulo the action of $\C^*$ on $Q$ there is a {\it unique} such bubble.  Thus the corresponding two-level curve in $\oMm\,\!^{X,E}_\la(J;p)$ is a rigid object.
Moreover, because $\la_Q$ projects to the class $\eps\in H_2(E)$ of the exceptional divisor, its homology class $(\la -\eps) + pr(\la_Q)$ is $
(\la -\eps) + \eps = \la.$  Such a bubble can also be thought of as a holomorphic section of the bundle $\Oo(1)$ that is zero at $\pi(p)$.
\SSS

\NI (ii)  If $X$ is as above but $D$ is the line $H$ then
$\oMm\,\!^{X,H}_{\la,1}(J)$ has complex dimension $2$.  It contains a 
$1$-dimensional family of curves that each consist entirely of a level $1$ bubble, with one such curve for each point in $H_{1\infty}$. 
\end{example}

To state the index formula we need more notation.  For simplicity let us suppose there are no absolute marked points.
Then each level $C_i$ (which may not be connected)  is an equivalence class of stable maps 
 $$
 [\Si_i,u_i, y_{10},\dots,y_{r_{i0}0},y_{1\infty},\dots,
 y_{r_{i\infty }\infty}],
 $$
in some class $\be_i$,  where the internal relative marked points
 $y_{10},\dots,y_{r_{i0}0}$ are mapped to $D_{i0}$ 
 with multiplicities
  $\un m_{i0}$  that sum to $\be_i\cdot D_{i0}$
  and $y_{1\infty },\dots,y_{r_{i\infty }\infty}$ are taken to $D_{i\infty}$ with multiplicities $\un m_{i\infty }$ that sum to $\be_i\cdot D_{i\infty }$.  
 The index $I_i$ of such a curve is defined to be the 
formal dimension of the moduli space of all such curves with domain of the given genus, where we impose the conditions that $u_i(y_{\ell0})\in D_{i0}, 
u_i(y_{\ell \infty})\in D_{i\infty }$, subtract $-1$ (because of the $\C^*$ action) when $i>0$, and also subtract 
$(\be_i\cdot D_{i0} - r_{i0}) + (\be_i\cdot D_{i\infty i} - r_{i\infty })$ 
to take care of the multiplicity of the intersections 
along the divisors.  
The components $C_i$ and $C_{i+1}$ match along $D_{i\infty } = 
D_{i+1,0}$ only if the multiplicities $\un m_{i\infty }$ and
$\un m_{i+1,0}$ agree (after possible reordering). Also
we need corresponding ends of $C_i$ and $C_{i+1}$ to intersect transversally in $D_{i\infty } = D_{i+1,0}$ (so that they can be glued).  
Thus the index (over $\C$) of the $(k+1)$-level curve
 $C=\cup_{i=0}^{k} C_i$ is 
\begin{equation}\labell{eq:kind1}
 {\rm ind\,} C = \sum_{i=0}^{k} I_i - r(n-1),
\end{equation}
  where $I_i$ is as above and $r = \sum_{i=1}^k r_i$ is the number of internal intersection points.  

 The next result was noted by Ionel--Parker~\cite[Prop.~14.10]{IP}.
 
 \begin{lemma}\labell{le:kind1} 
 In the genus zero case, the index of a higher level component in $Q$ equals that of its projection to $D$ (considered as a curve in $D$).
\end{lemma}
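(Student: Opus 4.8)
\emph{Proof proposal.} The plan is to reduce the lemma to one ``relative adjunction'' identity on the ruled surface $\pi_Q\colon Q=\PP(\C\oplus L_D^*)\to D$, and then to read it off from the two definitions of ``index'' by direct substitution.

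First I would write out the two numbers to be compared. As above, take the higher level component to be a genus zero stable map into $Q$ in a class $\be_i\in H_2(Q)$, with $r_{i0}$ relative points mapped to the zero section $D_0:=D_{i0}$, $r_{i\infty}$ to the infinity section $D_\infty:=D_{i\infty}$, and no absolute marked points; write $\ell:=r_{i0}+r_{i\infty}$ and $n:=\dim_{\C}X$, so $\dim_{\C}Q=n$ and $\dim_{\C}D=n-1$. Unwinding the definition of $I_i$ given above --- begin with the formal dimension $\dim_{\C}\oMm_{0,\ell}(Q,\be_i)=c_1^Q(\be_i)+(n-3)+\ell$, impose the $\ell$ complex-codimension-one incidence conditions $u_i(y_{j0})\in D_0$, $u_i(y_{j\infty})\in D_\infty$, subtract $1$ for the fibrewise $\C^*$ action, and subtract $(\be_i\cdot D_0-r_{i0})+(\be_i\cdot D_\infty-r_{i\infty})$ --- the $+\ell$ from the marked points cancels the $-\ell$ from the incidence conditions, and one is left with
\[
I_i \;=\; \bigl(c_1^Q(\be_i)-\be_i\cdot D_0-\be_i\cdot D_\infty\bigr)+(n-4)+r_{i0}+r_{i\infty}.
\]
On the other hand the projected curve $\ov{u}_i:=\pi_Q\circ u_i$ is a genus zero curve in $D$ in the class $\ov{\be}_i:=(\pi_Q)_*\be_i$ carrying the same $\ell$ marked points, now with no incidence constraint at all, because $\pi_Q$ restricts to an isomorphism on each of $D_0$ and $D_\infty$; hence its index is $\dim_{\C}\oMm_{0,\ell}(D,\ov{\be}_i)=c_1^D(\ov{\be}_i)+(\dim_{\C}D-3)+\ell=c_1^D(\ov{\be}_i)+(n-4)+r_{i0}+r_{i\infty}$. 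Comparing the two expressions, the lemma is equivalent to the numerical identity $c_1^Q(\be_i)-\be_i\cdot D_0-\be_i\cdot D_\infty=c_1^D(\ov{\be}_i)$.

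The substance of the proof is therefore the cohomological statement
\[
c_1(TQ)\;=\;[D_0]+[D_\infty]+\pi_Q^*c_1(TD)\quad\text{in }H^2(Q),
\]
which I would prove by splitting off the fibre direction. From $0\to T^{v}Q\to TQ\to\pi_Q^*TD\to 0$, where $T^{v}Q:=\ker(d\pi_Q)$, it is enough to show $c_1(T^{v}Q)=[D_0]+[D_\infty]$. Both sides pair to $2$ with the class of a fibre --- the left side because a fibre is a $\PP^1$, the right because $D_0$ and $D_\infty$ are sections --- so by Leray--Hirsch they differ by a class pulled back from $D$; this ambiguity is killed by restricting to the section $D_\infty$, where $[D_0]|_{D_\infty}=0$ (disjoint sections), $[D_\infty]|_{D_\infty}=c_1(N_{D_\infty/Q})$, and $T^{v}Q|_{D_\infty}\cong N_{D_\infty/Q}$ (the vertical tangent bundle restricted to a section is its normal bundle), using that $\pi_Q$ carries $D_\infty$ isomorphically onto $D$. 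Equivalently one may simply quote that the relative anticanonical bundle of a $\PP^1$-bundle equals $\Oo(S_0+S_\infty)$ for any two disjoint sections $S_0,S_\infty$, this being the relative form of the fact that on $\PP^1$ the class $-K$ is represented by a sum of two distinct points. Pairing the displayed identity with $\be_i$ and using $\langle\pi_Q^*c_1(TD),\be_i\rangle=\langle c_1(TD),(\pi_Q)_*\be_i\rangle$ gives exactly $c_1^Q(\be_i)-\be_i\cdot D_0-\be_i\cdot D_\infty=c_1^D(\ov{\be}_i)$, whence $I_i=\dim_{\C}\oMm_{0,\ell}(D,\ov{\be}_i)$, the index of the projected curve.

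I do not anticipate a genuine obstacle; the only delicate point is the bookkeeping in the definition of $I_i$, together with the observation that genus zero is really used. Indeed, repeating the count with a genus $g$ domain replaces $(n-3)$ by $(1-g)(n-3)$ in $\dim_{\C}\oMm_{g,\ell}(Q,\be_i)$ and $\dim_{\C}D-3$ by $(1-g)(\dim_{\C}D-3)$ in $\dim_{\C}\oMm_{g,\ell}(D,\ov{\be}_i)$, so for $g>0$ the index of the higher level component and that of its projection differ by $(1-g)(n-3)-1-(1-g)(n-4)=-g$; I would end with this remark, which explains the genus zero hypothesis.
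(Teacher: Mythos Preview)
Your proof is correct and follows essentially the same route as the paper's: both compute the two indices directly and reduce the equality to the Chern class identity $c_1^Q(\be)=c_1^D\bigl((\pi_Q)_*\be\bigr)+\be\cdot D_0+\be\cdot D_\infty$. The paper writes $\be=\al+df$ with $\al\in H_2(D_0)$ and simply quotes this identity in the form $c_1^Q(\be)=c_1^D(\al)+\al\cdot D_0+2d$, while you supply a proof via the short exact sequence $0\to T^vQ\to TQ\to\pi_Q^*TD\to 0$ and Leray--Hirsch; the paper also carries homological constraints $b_i$ through the count on both sides (contributing the same $\de_b-rn$ versus $\de_b-r(n-1)$ terms), which you omit, but this is harmless since the constraints are literally the same cycles in $D\cong D_0\cong D_\infty$ and cancel in the comparison. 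Your closing computation that the two indices differ by $-g$ in genus $g$ is a nice addition not in the paper and makes the role of the genus zero hypothesis explicit.
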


\begin{proof}  Consider a  sphere
 in $Q$  in a class $\be: = \al+df$, where $f$ is the class of the fiber of $\pi_Q:Q\to D$ and $\al\in H_2(D_0)$,  
that intersects $D_\infty$ at $r_\infty$ points 
 through cycles $b_\infty: = (b_{\infty i})$ with multiplicities $d_{\infty i}$ 
 summing to $d$, and  intersects $D_0$ at $r_0$ points 
 through cycles $b_0: = (b_{0j})$ with multiplicities $d_{0j}$ 
  summing to $d + \al\cdot D_0$. Let $\de_b$ be half the sum of the degrees of the cycles $b_{\infty i}, b_{0j}$ and set $r: = r_\infty + r_0$, the total number of constraints. Then the formal (complex) dimension of the moduli space of such spheres (when quotiented out by the $\C^*$ action on $Q$) is
\begin{eqnarray*}
&&  n + \bigl(c_1^D(\al) + \al\cdot D_0 + 2d\bigr) + (r -3)  - (d-r_\infty) - (d+\al\cdot D_0 - r_0) + (\de_b - rn) -1 \\
 && \qquad\qquad = \;\;
  n + c_1^D(\al) + 2r-3 + \de_b - rn -1.
\end{eqnarray*}
Here $n: = \dim_\C X$,
 $ c_1^D(\al) + \al\cdot D_0 +2d=c_1^Q(\be)$,
 the term $r-3$ takes care of the variation of the marked points,  the terms $d-r_\infty, d+\al\cdot D_0 - r_0$ 
appear because we must subtract $d_i-1$
 at each relative insertion of order $d_i>1$, and we subtract
 $rn-\de_b$ to take care of the homological constraints.
 
When we project these spheres to $D$ they still have $r$ marked points through the constraints  $b_{\infty, i}, b_{0,j}$, now considered as lying in $D$, but all information about the orders of the normal tangencies is lost.  The formal dimension of the moduli space of such curves is
$(n-1) +  c_1^D(\al) + r-3 + \de_b - r(n-1)
$, 
 just what we found above.  \end{proof}
The next result can be easily checked; it is implicit in Li--Ruan~\cite{LR} formula (5.1).

\begin{lemma}\labell{le:kind}  The index of a genus $g$ 
curve in class $\be$ 
with $k+1$ levels  and given incidence multiplicities 
along $D_{\infty k}$ is $I(g,\be) - k$, where 
$I(g,\be)$ is the index of the main stratum of curves with no higher levels.
\end{lemma}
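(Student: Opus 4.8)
The plan is to unwind the index formula~\eqref{eq:kind1} by substituting the explicit value of the index $I_i$ of each level (as described in the paragraph preceding \eqref{eq:kind1}), and then to check that every level-dependent term telescopes away, leaving $I(g,\be)$ together with a net $\sum_{i=1}^{k}(-1)=-k$ coming from the $k$ copies of the fiberwise $\C^*$-normalization.

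First I would simplify the index of each higher level $C_i$, $1\le i\le k$. Since $C_i$ lies in the ruled surface $\pi_Q\colon Q_i=\PP(\C\oplus L_D^*)\to D$, I would use the adjunction relation $c_1^{Q_i}=\pi_Q^{\,*}c_1^D+[D_{i0}]+[D_{i\infty}]$ when evaluating $c_1^{Q_i}(\be_i)$ in the formula for $I_i$; the resulting intersection terms $\be_i\cdot D_{i0}$ and $\be_i\cdot D_{i\infty}$ are then cancelled precisely by the multiplicity corrections $\be_i\cdot D_{i0}-r_{i0}$ and $\be_i\cdot D_{i\infty}-r_{i\infty}$ that are subtracted in the definition of $I_i$. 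What is left depends on $\be_i$ only through its projection $\al_i=(\pi_Q)_*\be_i\in H_2(D)$, the genus of $C_i$, and the numbers $r_{i0},r_{i\infty}$ of relative points on $C_i$ --- this is exactly Lemma~\ref{le:kind1} with the genus-zero hypothesis dropped, the extra $-1$ being the $\C^*$-term. The principal level $C_0$ needs no such simplification: I simply keep $I_0$ expressed through $\be_0$, $g_0$ and $r_{0\infty}$.

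Next I would assemble $\sum_{i=0}^{k}I_i-r(n-1)$ using three elementary identities. (i) Homology is additive: the class of the glued curve is $\be=\be_0+\sum_{i\ge1}\al_i$, so $c_1^X(\be)$ and $\be\cdot D$ split accordingly; the terms $c_1^X(\al_i)-c_1^D(\al_i)=\al_i\cdot D$, coming from the normal bundle of $D$, then cancel against the corresponding terms in the splitting of $\be\cdot D$, so they do not appear in the final comparison. (ii) Consecutive levels meet in a common set of nodes, $r_{i0}=r_{i-1,\infty}$, which lets me rewrite the total number of relative points over all levels in terms of the internal-node count $r=\sum_{i=1}^{k}r_i$ and the number $\rho=r_{k\infty}$ of genuine relative marked points along $D_{k\infty}$. (iii) Genus is additive under gluing: gluing the $k+1$ domains at the $r$ internal nodes and comparing Euler characteristics of structure sheaves gives $\sum_{i=0}^{k}(1-g_i)=(1-g)+r$. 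Inserting (i)--(iii), the coefficient of $r$ comes out to $(n-3)+2-(n-1)=0$, so all $r$-dependence disappears and the sum collapses to $I(g,\be)-k$.

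I expect the one delicate point to be the genus bookkeeping in (iii): one must fix what ``the genus of level $C_i$'' means when $C_i$ is disconnected or reducible --- namely the arithmetic genus in the normalization-additive sense, $1-g_i=\chi(\Oo_{\Si_i})$, which is precisely the convention under which the level-by-level index counts used to define $I_i$ remain valid --- and then check that gluing at one node lowers $\chi$ of the structure sheaf by exactly one, so that the $r$ internal nodes contribute exactly $+r$. The rest is routine arithmetic. Alternatively the statement can be proved by induction on $k$, the inductive step being the single assertion that gluing the top two ``rubber'' levels along their $s$ common nodes produces one level whose index equals the sum of the two indices, minus the matching cost $s(n-1)$, plus $1$ --- the ``$+1$'' again being the reappearance of a single $\C^*$-quotient.
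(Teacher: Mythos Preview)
Your proposal is correct and is precisely the kind of direct verification the paper has in mind: the paper does not actually prove this lemma, merely remarking that it ``can be easily checked'' and is ``implicit in Li--Ruan~\cite{LR} formula (5.1).'' Your outline --- simplifying each higher-level index via the adjunction identity $c_1^{Q}=\pi_Q^{\,*}c_1^D+[D_0]+[D_\infty]$ exactly as in the proof of Lemma~\ref{le:kind1}, then summing using the additivity of homology, the matching $r_{i0}=r_{i-1,\infty}$, and the Euler-characteristic identity $\sum_i(1-g_i)=(1-g)+r$ --- is the natural way to carry this out, and your cautionary remark about the arithmetic-genus convention for disconnected levels is well placed.
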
 

The constraints for the relative invariants consist of homology classes  $b_j$ in the
divisor $D$ (the relative insertions) together with absolute (descendent) insertions 
$\tau_{i_j}a_j$ where $a_j\in H_*(X)$ and $i_j\ge 0$.
   We shall denote
the (connected) relative genus zero invariants by:
\begin{equation}\labell{eq:GW}
\bla \tau_{i_1}a_1,\dots,\tau_{i_q}a_q\,|\, b_1,\dots,b_r\bra^{X,D}_{0,\be, (d_1,\dots,d_r)},
\end{equation}
where $ a_i\in H_*(X), b_i\in H_*(D),$ and $ d: = \sum d_i = \be\cdot D\ge 0.
$  (If $\be\cdot D< 0$ then the moduli space is undefined and the corresponding invariants are set equal to zero.)
This invariant counts connected genus zero curves in class $\be$
that go through cycles representing the classes $a_i$ and intersect $D$ to order $\un d: = (d_1,\dots,d_r)$ in the $b_i$,  i.e the $i$th relative marked point intersects $D$ to order $d_i\ge 1$ at a point on some representing cycle for $b_i$.  We assume that the cycles $a_i$ are in general position to $D$, while the $b_i$ lie in $D$.  Moreover, the insertion
$\tau_{i}$ occurring at the $j$th absolute marked point means that
we add the constraint $(c_j)^i$, where $c_j$ is the first Chern class of the cotangent bundle to the domain at the $j$th marked point.    One can evaluate
(\ref{eq:GW}) (which in general is a rational number) by integrating
an appropriate product of Chern classes over the virtual cycle corresponding to
the moduli space of stable $J$-holomorphic  maps that satisfy the given homological constraints and tangency conditions.  For details on how to construct this virtual cycle  see for example Li--Ruan~\cite{LR} or Hu--Li--Ruan~\cite{HLR}. \footnote
{
What one needs here is an appropriate framework in which to construct suitable multivalued perturbations as in \cite{Mcbr}; see also \cite{Mcq}.  There are many possible ways of solving this problem.
A  very general construction that applies in a wide variety of situations will eventually
 be provided by Hofer--Wysocki--Zehnder~\cite{HWZ} and \cite{H}.  This should give a coherent setting in which one could reprove the results claimed here. For another approach see Zinger \cite{Z2,Z3}.}
 Since the virtual cycle  has dimension equal to the relevant
  index, the above formula defines
 a number  only when this index equals the sum of the dimensions of the constraints.  In all other cases the invariant is defined to be zero.
\MS

We now give some examples to explain the structure of the stable maps considered here, and to 
illustrate the index calculations.
 
\begin{example}\labell{ex:relcon}\rm   (i)
Let $(X,D) = (\PP^2,\PP^1)$ and consider the moduli space 
$\oMm_{2\la,(2),4}^{X,D}(p)=:\oMm$ of degree $2$ stable maps,  with $3$ absolute marked points $z_1, z_2, z_3$ and one relative marked point $z_0$, and
that are tangent to $D$ at the fixed point $p=u(z_0)\in D$.  Its open stratum $\Mm$
consists of equivalence classes $[u;z_0,\dots,z_3]$ where
 $u:\PP^1\to \PP^2$ is a map of degree $2$ (i.e. a conic) such that $u^{-1}(p) = 
 u^{-1}(D)=z_0$.
The (complex) dimension of $\Mm$ is $6$, one less than the dimension of the space of such maps with $u(z_0)=p$, since the tangency condition can be interpreted as saying that the two intersection points of the conic with $D$ coincide.  The evaluation map takes this open stratum injectively onto the set of generic triples of points $u(z_1),u(z_2), u(z_3)$ in $\PP^2$.

The full moduli space $\oMm$
contains various degenerate curves.   We now investigate some of
 the corresponding strata, showing that their dimensions
agree with the formula in Lemma~\ref{le:kind}. The usual stratification of $\oMm$ (which is determined by 
 the domain and marked points of the stable map) is 
  closely related to the positions of the image points $u(z_1), 
  u(z_2), u(z_3)$, and we shall analyze $\oMm$ by looking at this image configuration rather than the strata.

If  $u(z_1), u(z_2), u(z_3)$ are collinear the curve splits into a line $\ell$ in $\PP^2$ through these three points  together with a bubble component in 
$Q: = \PP(\C\oplus L_D^*)$ 
 through the point $q:=\ell\cap D$
on the zero section $D_{0}: = \C\oplus \{0\}$ of $Q$ and tangent to the infinity section $D_{\infty}$ at the point $u(z_0)$.   One can think of this bubble as a meromorphic section of $L_D^*$ with a  
pole of order $2$ at $p$ and a simple zero at $q$, or  from a more symplectic viewpoint  as a curve in the compact ruled surface $Q$
 in the class $D_0 + 2f$, where $f$ denotes the class of the fiber of $Q\to D$.  It is  counted modulo the $\C^*$ action that fixes $0$ and the points of $D_\infty$.  Since $Q$ is the one point blow up of $\PP^2$ with exceptional divisor $D_0$, each such bubble 
corresponds to a conic on $\PP^2$ (the blow down of $Q$) that is tangent to $D_\infty$ at a fixed point on $D_\infty$ and also tangent to a fixed direction at $0$.  There is one such conic modulo the $\C^*$ action.  Hence this bubble is rigid (i.e. lies in a zero-dimensional moduli space)
  and
contributes to the nonzero rubber invariant 
$$
\bla pt |\;|pt\bra^{D_0,Q,D_\infty,\sim}_{0,2f+\la_0,(1),(2)}
$$
that counts curves with these constraints modulo the $\C^*$ action on the fibers of $Q\to D$.   (Here the relative constraints are put  
on the left and right of the bracket and we use $\sim$ 
to denote rubber invariants as in Maulik--Pandharipande~\cite{MP}.)  
Since the (collinear)  points $u(z_1),u(z_2),u(z_3)$ have $5$ degrees of freedom, these curves form a codimension $1$ subspace of the full moduli space.  This is consistent with 
Lemma~\ref{le:kind}; these curves are regular and so should form a space whose dimension equals the index of the given stratum.

A second kind of degeneration occurs when  
$u(z_1)$ and $ u(z_2)$ lie on a line through the fixed point $p$. 
 Now the principal part of the curve has two components, one a line through $u(z_1), u(z_2)$ and the other a line through $u(z_3)$ and $p$, that are
 joined by a level $1$ bubble in class $2f$ that lies in the fiber over $p$, intersecting
 $D_{10}$ twice at $p$ with order $1$ and intersecting $D_{1\infty}$ once  with order $2$; see Figure \ref{fig:GW1}.  This fiberwise 
 bubble is rigid, and
 contributes to the nonzero rubber invariant 
$$
\bla D_0, D_0 |\;\;|pt\bra^{D_0,Q,D_\infty,\sim}_{0,2f,(1,1),(2)}.
$$

\begin{figure}[htbp] 
   \centering
   \includegraphics[width=2.2in]{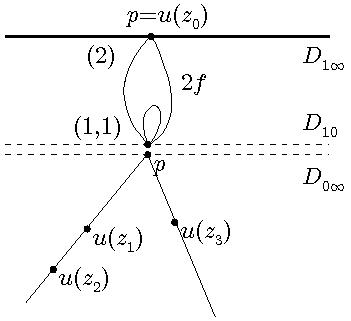} 
   \caption{The two component stable map 
   in Example \ref{ex:relcon} (i) when $p, u(z_1)$ and $u(z_2)$ are collinear. The schematic picture of the bubble is meant to illustrate its parametrization; its image lies entirely in the fiber over $p$ and it intersects the divisors $D_{10}, D_{1\infty}$
   with multiplicities $(1,1), (2)$.}
   \label{fig:GW1}
\end{figure}

\NI
Because this rubber invariant counts curves in a fiber class that go through a fixed point,
it equals the corresponding 
invariant in the fiber  of $Q\to D$. Identify this fiber with
$S^2:=\C\cup \{\infty\}$, meeting 
 $D_0, D_\infty$ in the points $\{0\}$ and $\infty$.  Then
this invariant  counts the maps $\C\to \C$ of the form $z\mapsto a z(z-b)$ modulo the reparametrization $z\mapsto cz$ of the domain
and the $\C^*$ action $w\mapsto dw$ on the target.  Hence this invariant is $1$.  Again the points $u(z_1), u(z_2), u(z_3)$ have $5$ degrees of freedom, so that this stratum of regular curves has codimension $1$.

If all $4$ points are collinear (but still distinct), then we could  have: 
\SSS

\NI
(a) two distinct 
lines through
$p$ together with a bubble at $p$ as in the preceding paragraph but with $z_1,z_2,z_3$ all on the same line,

\NI
(b) two coincident lines plus a bubble (with various possibilities  for the marked points), or 

\NI
(c)
 a doubly covered line through these $4$ points with one of its branch points at $p$.  
 \SSS
 
 Since the collinearity condition has index $2$ we would expect 
 the corresponding spaces of stable maps to have
codimension $2$.  This is true in case (b), because although 
the curves are regular and have two levels their principal components satisfy an extra constraint.  However  in  cases (a) and  (c)
the curves  have 
$5$ degrees of freedom; e.g. in (c) there is one for the line, one for its second branch point and three for the positions of the other points on it. 
In these degenerate cases the curves are not uniquely 
determined by the position 
of the points $u(z_1), u(z_2), u(z_3)$, more precisely
the collinearity conditions are not transverse to the evaluation map
$\oMm\to (\PP^2)^3$ and so do not cut down its dimension in the expected way. 

Other degenerations occur when some of the points coincide; we leave 
further discussion to the reader.
\SSS

\NI (ii)
Now  consider the moduli space $\oMm$ of conics in $\PP^2$ with $3$ absolute  and $2$ relative marked points that intersect the line $D$ with multiplicity $\un 2=(1,1)$.  This has dimension $8$.  
 $\oMm$
contains a codimension $1$ stratum $\Ss$ consisting of curves whose two intersection points with $D$ coincide.     
 The open stratum in $\Ss$ is the $7$-dimensional space of two-level curves where $C_0$ 
is tangent to $D_{0\infty}$ at some unspecified point and $C_1$ is a rigid fiberwise bubble contributing to the rubber invariant 
$$
\bla pt |\;\;|D_\infty, D_\infty\bra^{D_0,Q,D_\infty,\sim}_{0,2f, (2), (1,1)}.
$$
In other words, $C_1$ is tangent to $D_{10}$ at some specified point and intersects $D_{1\infty}$ 
twice transversally at arbitrary points.  Thus
 the incidence conditions of the bubble along $D_{1\infty}$ have type $(1,1)$ while the incidence conditions along the internal divisor
 $D_{10}$
 are dual to those of the principal component. Note that these are not the same as the elements considered in (i) because the 
 incidence conditions along $D$ 
 have type $(1,1)$ instead of $(2)$.
 \end{example} 
 
\begin{figure}[htbp] 
   \centering
   \includegraphics[width=4in]{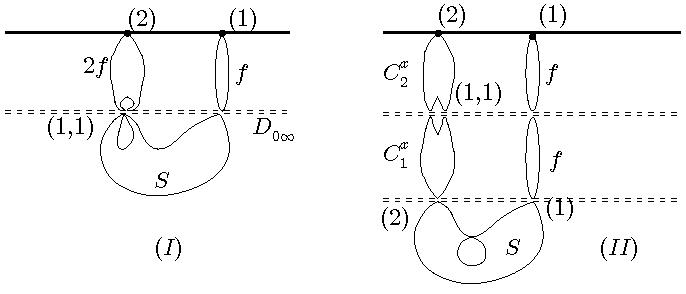} 
   \caption{Some of the stable maps considered 
   in Example \ref{ex:rel2}.   In (I) $S$ is an immersed sphere of 
   degree $3$ with its node on $D_{0\infty}$ while in (II) it
   is tangent to $D$.
   Each bubble lies in a single fiber. Hence when a bubble intersects one of the  divisors $D_{ij}$ with multiplicity $(1,1)$ its two intersection points with $D_{ij}$ coincide.}
   \label{fig:GW2}
\end{figure}

\begin{example}\labell{ex:rel2}\rm   Consider the moduli space of degree $3$ tori in $X: = \PP^2$
that intersect $D:=\PP^1$ with multiplicity $(2,1)$.  Its main stratum 
consists of tori tangent to $D$ and by formula (\ref{eq:dim}) has dimension $8$.  
Figure \ref{fig:GW2} illustrates two ways in which
 the genus of the stable map may carried by the pattern of intersections of the bubble components, rather than by the genus of any single component. 
  
In (I), the
main component $S$ is an immersed sphere of degree $3$ that
intersects $D$ transversally at its node as well as at one other point. The level $1$ curve consists of two fiberwise bubbles, 
one of which is 
doubly covered and intersects $D_{10}$ with multiplicity $(1,1)$, thus creating some genus.  Since both bubbles are rigid, the dimension of the space of curves of this type is the same as the dimension of the space of spheres $S$ with node on $D$; thus it is $7$, which agrees with the formula in Lemma~\ref{le:kind}.

In (II) the main component 
is still an immersed sphere of degree $3$, but now we assume that it is tangent to $D$ at some point $x$, and has one other transverse intersection point with $D$.  In particular, its node is disjoint 
from $D$.
There are two other levels that
contain   
doubly covered spheres $C_1^x, C_2^x$
in the fiber over $x$ that are tangent to $D_{10}$ and $D_{2\infty}$ but intersect the intermediate divisors $D_{1\infty}, D_{20}$ with multiplicity $(1,1)$. (Each of these levels also contains a singly covered sphere through the other intersection with $D$, but these are irrelevant to
 the present discussion.)
  The dimension of this  stratum is again equal to the dimension of the family of spheres $C_0$, and so is $7$, which is not equal to the index. The point is that although
 the components $C_i^x$ are stable and regular, they lie in moduli spaces that do {\it not} intersect transversally along $D_{1\infty}= D_{20}$ because their two intersections with this divisor must always coincide.  Therefore the only way to regularize this stratum is by gluing. The dimensions then work out correctly, because 
one must subtract $1$, the dimension of the relevant
obstruction bundle.
One way to understand this is to note that the two bubbles $C_1^x$ and $C_2^x$ do glue to give a family of regular tori in the 
single $\PP^1$ fiber over $x\in D$.  However this torus has trivial normal bundle when considered as a curve in $Q$
and so is {\it not} regular in $Q$ but rather has a cokernel of dimension $\dim D$.
 This remark also explains why there is no similar problem with the index in case (I) since in this case the glued curve lies in $X$. 
\end{example}
 
The above nontransversality phenomenon does not occur in genus zero.
As we now explain, it follows that as long as
 the divisor $D$ is suitably positive we can ignore 
 the curves with higher levels.   
 
 \begin{rmk}\labell{rmk:g0}\rm  Suppose that $D$ has \lq\lq nonnegative" normal bundle $L_D$; for example we might
 suppose that
 $c_1(L_D) = \la [\om|_D]$ for some $\la\ge 0$, or, more generally that $c_1(L_D)(\al)\ge 0$ for all classes $\al\in H_2(D)$ that can be represented by $J_D$-holomorphic spheres. 
 Then
in the genus zero case, {\it curves with higher levels do not contribute to  relative invariants}
\begin{equation}\labell{eq:Gw0}
\bla a_1,\dots,a_q\,|\, b_1,\dots,b_r\bra^{X,D}_{0,\be, (d_1,\dots,d_r)}
\end{equation}
{\it with only homological constraints.}
By this we mean the following.  
Denote by $\oMm\,\!^{X,D}_{\be,\un d}(J)$ the moduli space of all genus $0$ stable $J$-holomorphic curves in class $\be$ with $q$ absolute and $r$ relative marked points.  Suppose we partially regularize this moduli space by perturbing the equations for the components of the curves in each level to make them as generic as possible,  but not performing any gluing operations.
Here we only allow perturbations $\nu$ that preserve the structure of the target; i.e. we look at equations of the form $\ov{\p}_J u = \nu(u)$ where the perturbing $1$-form $\nu(u)$  at level $0$ vanishes 
sufficiently fast in the normal directions to $D$ and at higher levels is pulled back from $D$ by the projection $\pi:Q\to D$. 
Hence each higher level component still projects to a curve in $D$ and 
our procedure regularizes each of these projected components as a curve in $D$. 

We claim that in the genus $0$ case this procedure is enough to reduce the dimension of the set $\Ss_H$ of curves with higher levels 
below that of the full  moduli 
space.  Hence the partially regularized  set $\Ss_H$ does not have large enough dimension to go though all 
the homological constraints and 
so does not contribute to the invariant.
This is not true in
 higher genus because of the possible 
 existence of degenerate objects such as the 
nontransverse multiply covered fibers of Example~\ref{ex:rel2}. 

There are several points to note about the proof of the claim.  
Denote by $C_i^D, i=1,\dots,\ell$,
 the projections to $D$ of the higher level components  of $C$.
 The adjustments to the index coming from the multiplicities $\un d$ do not affect the argument, and so for simplicity we will
 ignore them, assuming in effect that $\un d=(1,\dots,1)$. 
 Then, the relative index is just ${\rm ind}_X\,\be$, the index of 
 genus zero $\be$ curves in $X$.

First note that,  by Lemma~\ref{le:kind1}, our perturbations are sufficient to
reduce the dimension of the moduli space of each $C_i^D$ to its index
(in $D$).   
Secondly,  we may arrange that the  $C_i^D$ 
intersect transversally in $D$ because  their 
 intersection pattern is a tree so that
two distinct components $C_i^D, C_j^D$ have at most one intersection point; 
cf. \cite[\S6.3]{MS}. 
Thirdly, because of the positivity assumption
 $c_1(L_D)(C_i^D)\ge 0$ for all $i$, the curves $C_i^D$  are regular when considered as curves in $X$,
 and (because they
all can be perturbed off $D$) their intersections are transverse in $X$ as well as in $D$.    

Now suppose that 
$\ell=1$ and $C$ has no components at level zero.  Then 
$C$ consists of single component in $Q$ whose projection to $D$ lies 
in some class $\be$,   and the 
claim holds  because 
  $c_1^D(\be)\le c_1^X(\be)$, so that $ {\rm ind}_D \,\be <  {\rm ind}_X\, \be$.
On the other hand, if $C$ has more than one component, it follows readily from the above remarks show that its index in the relative moduli space
is at most equal to the index of the stable map in $X$ with components $C_0, C_i^D$ and so is less than ${\rm ind}_X\,\be$. 
Hence if $C$ has higher levels it  does not lie in 
the top dimensional stratum and so cannot satisfy all the homological constraints in (\ref{eq:Gw0}).  

The failure of the above argument for \lq\lq negative" $D$  
is the reason why absolute 
 and relative GW invariants can differ in cases when one might think they should agree, e.g. if $\be\cdot D=0$; cf.  
 Proposition~\ref{prop:=0}.
 \end{rmk}

\subsection{First results on Gromov--Witten invariants}

We now explain some vanishing results that follow by dimensional arguments.
 As a warmup, we begin by considering the relative invariants of 
 $(\PP^n,\PP^{n-1})$.
 
\begin{lemma}\labell{le:000}  
 Let $X = \PP^n$  and  $D=\PP^{n-1}$ be the hyperplane. Denote by $\la$ the class of a line.
Suppose that $n>1$ and $d>0$  or $n=1$ and $d>1$.  Then
$$
\bla \;| b_1,\dots,b_r\bra^{\PP^n,\PP^{n-1}}_{0,d\la,\un d} = 0,\qquad \mbox{for any }
b_i\in H_*(\PP^{n-1}).
$$ 
\end{lemma}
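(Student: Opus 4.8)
The plan is a dimension count. Fix a partition $\un d=(d_1,\dots,d_r)$ of $d$, so that $1\le r\le d$, and write $\oMm$ for the relative moduli space $\oMm^{\PP^n,\PP^{n-1}}_{0,d\la,\un d}$. Its virtual dimension equals the index of its main stratum, the strata carrying higher levels being of strictly smaller index, by Lemma~\ref{le:kind}; so I would first compute that index. Starting from $\oMm_{0,r}(\PP^n,d\la)$, which has complex dimension $(n+1)d+(n-3)+r$, the requirement that the $i$th relative marked point meet $D=\PP^{n-1}$ with contact order $d_i$ imposes complex codimension $d_i$, namely codimension $1$ for landing on $D$ together with a further $d_i-1$ for the tangency. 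Summing over the $r$ relative points removes $\sum_i d_i=d$ dimensions, so
\[
\dim_{\C}\oMm\;=\;(n+1)d+(n-3)+r-d\;=\;nd+n-3+r .
\]

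Next I would bound the constraints. If $b_j\in H_{2c_j}(\PP^{n-1})$, then requiring the $j$th relative marked point to lie on a representing cycle for $b_j$ is a condition of complex codimension $(n-1)-c_j$ inside $D$, hence at most $n-1$, with equality exactly when $b_j$ is a point. So the sum of the codimensions of the constraints appearing in the invariant is at most $r(n-1)$. As the invariant can be nonzero only when this sum equals $\dim_{\C}\oMm$, a necessary condition for nonvanishing is $nd+n-3+r\le r(n-1)$, that is,
\[
n(d+1-r)+2r-3\;\le\;0 .
\]
But $r\le d$ gives $d+1-r\ge 1$, and $r\ge 1$. Hence if $n\ge 2$ the left-hand side is at least $n+2-3=n-1\ge 1$, while if $n=1$ it equals $d+r-2\ge 1$ (here using $d>1$); in either case it is positive, contradicting the displayed inequality. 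Thus for every partition $\un d$ and every choice of classes $b_j\in H_*(\PP^{n-1})$ the dimensions fail to match, so $\bla\,|\,b_1,\dots,b_r\bra^{\PP^n,\PP^{n-1}}_{0,d\la,\un d}=0$.

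I do not anticipate a real obstacle; the argument is purely a formal dimension count. The two points that want a little care are that the bound $r(n-1)$ on the total codimension of the relative constraints holds uniformly over all partitions of $d$ (automatic, since $r\le d$), and that the strata of $\oMm$ carrying higher levels do not raise the virtual dimension, which is precisely the content of Lemma~\ref{le:kind}, so no separate analysis of the rubber components is needed.
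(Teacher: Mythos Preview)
Your argument is correct and is essentially the same dimension count as the paper's. The only difference is bookkeeping: you fold the codimension-$1$ condition ``relative marked point lies on $D$'' into the moduli space itself, obtaining $\dim_\C\oMm = nd+n+r-3$ and then measuring the constraint $b_j$ by its codimension $(n-1)-c_j$ in $D$, whereas the paper uses formula~(\ref{eq:dim}), which gives $nd+n+2r-3$, and measures each $b_j$ by its codimension $n-c_j$ in $X$. These two conventions differ by exactly $r$ on each side and lead to the identical inequality $n(d+1-r)+2r-3\le 0$, ruled out by the hypotheses. Your remark about higher-level strata via Lemma~\ref{le:kind} is harmless but unnecessary: the invariant vanishes by definition once the formal dimensions fail to match.
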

\begin{proof}   
We  show that under the given conditions on $n$ and $d$  it is impossible to choose  $\un d$ and the $b_i$ so  that
$\oMm\,\!^{\PP^n,\PP^{n-1}}_{d\la,\un d}$ has formal dimension $0$.
Therefore the invariant vanishes by definition.

By equation~(\ref{eq:dim}) the formal  (complex) dimension of the  moduli space of genus $0$ stable maps through the relative constraints $b_1,\dots,b_r$ is 
$$
n + d(n+1) + r-3 - (d-r) + \de - rn,
$$
where $\de$ is half the sum of the degrees of the
 $b_i$.  
   Thus $0\le \de \le rn$.  
 Since $d-r\ge 0$ and $r>0$, we therefore need
$(d-r)n + n + 2r+\de= 3$, which implies $d=r$ and $n=r=1$. 
This contradicts our assumptions on $n$ and $d$.
\end{proof}

Our next result concerns manifolds that are fibrations with fiber $\PP^1$. These are important because, as we shall see in \S\ref{s:GWu} they can be used to understand the relation between the absolute and relative invariants. There are much more complete discussions of this topic in Maulik--Pandaripande \cite{MP}, Hu--Ruan \cite{HR} and T.-J. Li--Ruan 
\cite{TJLR}. 

  Let $L\to D$ be a complex line bundle and $Y: =  \PP(L\oplus \C)$
  be its projectivization.  It contains two divisors 
  that we call $D',D^+$, where
  $D': = 
\PP(\{0\}\oplus \C)$ is the copy of $D$  along the zero section of $L$ and  $D^+: = \PP(L\oplus \{0\})$ is the copy \lq\lq at infinity". Thus $D'$ has normal bundle $L$ while $D^+$ has normal bundle $L^*$.\footnote
{
There are many commonly used notations; we are using that  of \cite{Mcu} where the formula are applied to a divisor $D$ that is a blow up so that its normal bundle $L$ is considered as \lq\lq negative".} 

Given $\al\in H_2(D)$, we shall write $\al'$ for the corresponding element of $H_2(D')$, considered where appropriate as an element in $H_2(Y)$.
Let $f\in H_2(Y)$ be the fiber class of the projection $\pi:Y\to D$.

\begin{lemma}\labell{le:relD}  Let $\be \in H_2(Y)$ and $r,s\ge 0$.\SSS

\NI
{\rm 
(i)} If $\be\cdot D'\ge 0$ but $\be \ne \ell f$, then
$$
\bla a_1,\dots,a_s | b_1,\dots,b_r\bra^{Y,D^+}_{0,\be,\un d} = 0,
$$
for any $
b_i\in H_*(D^+)$ and any $a_j\in H_*(Y)$ that can be represented by cycles of the form $\pi^{-1}(a_j')$ where $a_j'\subset D$. \SSS

\NI {\rm (ii)}    If $\be = \ell f$ then the invariant is nonzero only if $\ell=1$,
and $s=0$.
\end{lemma}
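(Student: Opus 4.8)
The plan is to run a dimension count entirely parallel to the proof of Lemma~\ref{le:000}, but now for the fibration $\pi:Y\to D$, and to exploit the fact that the relative divisor $D^+$ meets each fiber exactly once. First I would fix notation: suppose $\be\cdot D'\ge 0$ and write $\be$ in terms of a basis for $H_2(Y)$, say $\be = \al' + mf$ for some $\al\in H_2(D)$ and $m\in\Z$, where $f$ is the fiber class. Since $D^+\cong D$ has normal bundle $L^*$ and $D^+\cdot f = 1$, we have $d := \be\cdot D^+ = \al'\cdot D^+ + m$; one computes $\al'\cdot D^+$ in terms of $c_1(L)(\al)$ (using $D'\cdot D^+=0$ on $Y$ and the standard relations in $H^*(\PP(L\oplus\C))$), and similarly $\be\cdot D' = m$ so the hypothesis $\be\cdot D'\ge 0$ reads $m\ge 0$. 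The Chern number $c_1^Y(\be)$ likewise decomposes as $c_1^D(\al) + 2m + (\text{a term }c_1(L)(\al))$ by the adjunction/Euler sequence for the $\PP^1$-bundle.

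Next, since all the absolute insertions $a_j$ are pulled back from $D$ via $a_j=\pi^{-1}(a_j')$, each such constraint has real codimension $2$ less than $\dim_\R Y$ — i.e. a curve meeting the cycle $\pi^{-1}(a_j')$ imposes the condition that its \emph{image under $\pi$} meet the cycle $a_j'$ in $D$. So I would project the whole stable map to $D$ under $\pi$: every component, principal or higher level, maps to a genus-zero curve in $D$ in a class whose sum is $\al$, the higher-level projections already accounted for in the definition of $\be$ as in the setup. The relative marked points of the $Y$-curve along $D^+$ become marked points on the $D$-curve, but all the tangency data along $D^+$ is lost, exactly as in Lemma~\ref{le:kind1}. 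The upshot is an inequality: the formal dimension of the relative moduli space $\oMm^{Y,D^+}_{0,\be,\un d}$ cut down by the constraints is bounded above by the formal dimension of a corresponding moduli space of genus-zero stable maps in $D$ in class $\al$ through the cycles $a_j'$ and $b_i$ (the latter now viewed in $D$ via $D^+\cong D$), \emph{plus} the contribution of the fiber directions. I would make this precise by writing out (\ref{eq:dim}) for the $Y$-invariant and subtracting the dimensions of the constraints: after substituting the decompositions of $c_1^Y(\be)$ and $d=\be\cdot D^+$ from the previous paragraph, and using $d-r=\sum(d_i-1)\ge 0$, the surviving terms reorganize into (the expected dimension of the $D$-problem in class $\al$) $-\,2m\,-\,(\text{nonnegative fiber terms})\,-\,3$ or similar, so that for the $Y$-invariant to be nonzero one needs $m=0$, $r=d=0$ (no relative marked points, so $\al'\cdot D^+ = 0$ forces $c_1(L)(\al)=0$), and $\al$ itself must make the $D$-problem zero-dimensional with all constraints being points, forcing $\al=0$ as well; but $\al=0$, $m=0$ gives $\be=0$, which is excluded, unless in fact $\be=\ell f$ — and that case is the one split off into part (ii). This is essentially the same bookkeeping as in Lemma~\ref{le:000}, just with $D$ playing the role that a point played there.

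For part (ii), $\be=\ell f$ with $\al=0$: now the constraints $a_j=\pi^{-1}(a_j')$ project to $D$-cycles of positive codimension unless $a_j'=[D]$, but the projection of a fiber-class curve is a point, so a nontrivial $a_j$ insertion can never be satisfied — hence $s=0$. With $s=0$ and $\al=0$, formula~(\ref{eq:dim}) gives dimension $n(1-g) + 2\ell + r + 3(g-1) - \sum(d_i-1) = n + 2\ell + r - 3 - (\ell - r)$ at $g=0$ (since $d=\be\cdot D^+ = \ell$), which simplifies to $n + \ell + 2r - 3$; matching this to $\sum\dim b_i \le rn$ — I would be careful here, as the fiber-class moduli space is a rubber-type object and the bubbling analysis of Example~\ref{ex:relcon} shows the relevant count reduces to maps $\C\to\C$ — forces $\ell=1$ (the degree-one fiber class, one transverse intersection with $D^+$, $r=1$, and the single relative constraint a point). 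I'd then note that for $\ell=1$ the invariant is indeed the Euler class of the fibration / a count that equals $1$, matching the claim, and record that this is the familiar fact that straight fibers are the only fiber-class curves hitting $D^+$ transversally.

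The main obstacle I expect is \emph{not} the genus-zero dimension count, which is routine, but rather making airtight the claim that projecting the stable map to $D$ under $\pi$ only ever \emph{decreases} (weakly) the relevant dimension, uniformly over \emph{all} the higher-level strata. One has to check that on each stratum with $k+1$ levels the index drop from Lemma~\ref{le:kind} is compatible with the fiberwise contributions, and that the $\C^*$-quotient on the rubber levels of the $Y$-target interacts correctly with the fiber class $f$ of $\pi:Y\to D$ — in particular one must rule out a spurious gain in dimension from components living entirely in fibers of $\pi$ that are simultaneously "rubber" in the relative-$D^+$ sense. The cleanest way around this is probably to observe that such purely-fiber components contribute only $\be=\ell f$ pieces, handle those via part (ii), and for the rest invoke the positivity-free version of the argument in Remark~\ref{rmk:g0}: in genus zero the projected components are regularizable in $D$ and the index never exceeds that of the projected configuration. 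As the author remarks, "The argument is elementary, based on a dimension count," so I would keep the write-up to the substitutions above and flag the projection-monotonicity step as the one deserving a careful sentence.
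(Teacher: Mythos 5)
Your argument, as written, tries to extract the vanishing from formal--dimension bookkeeping: after projecting to $D$ you conclude that ``for the $Y$-invariant to be nonzero one needs $m=0$, $r=d=0$, $\al=0$.'' That cannot work. The invariant is evaluated precisely when the formal dimension of $\oMm\,\!^{Y,D^+}_{0,\be,\un d}$ equals the total codimension of the insertions, and this matching does occur for classes with $\be\cdot D'\ge 0$, $\be\ne \ell f$ (for instance $Y=\PP^2\times\PP^1$, $\be=\la'$ the line in the base, three insertions $\pi^{-1}(pt)$: formal dimension $3+3+3-3=6$ equals the total codimension); so no rearrangement of (\ref{eq:dim}) alone can force the conclusion, and the vanishing is a geometric fact, not a formal one. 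The paper's proof supplies the missing geometric mechanism by localization: choose $J$ invariant under the fiberwise $S^1$ action, note that all constraints are $S^1$-invariant, so only $S^1$-fixed configurations contribute, and these consist of (possibly multiply covered) fibers together with a connected genus zero curve in $D'$; one then regularizes the $\al'$-curves in $D'$ by $S^1$-invariant perturbations tangent to $D'$ and derives two incompatible dimension inequalities, the hypothesis $\be\cdot D'\ge 0$ entering exactly through $c_1^Y(\be)\ge \ell + c_1^{D'}(\al)$. Your projection idea is a surrogate for this, but the ``projection-monotonicity'' step you yourself flag is precisely the unproven core, and appealing to a ``positivity-free version'' of Remark~\ref{rmk:g0} begs the question: that remark uses nonnegativity of the normal bundle, which is not assumed here (indeed $D'$ may be quite negative, as in Example~\ref{ex:relD}), and one must also justify that fibration-compatible perturbations suffice to compute the invariant at all --- which is what the localization principle is doing for you.

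Two further concrete errors. First, writing $\be=\al'+mf$ with $\al'\in H_2(D')$ gives $\be\cdot D'=c_1(L)(\al)+m$, not $m$; this is not cosmetic, since in the correct argument the hypothesis $\be\cdot D'\ge 0$ is used only to bound $c_1^Y(\be)$ from below, whereas in your sketch ``$m\ge0$'' plays no visible role in the final contradiction. Second, in (ii) your reason for $s=0$ is wrong: a fiber over a point of $a_j'$ lies entirely inside $\pi^{-1}(a_j')$, so pulled-back absolute insertions \emph{can} be met (degenerately) by fiber-class curves. The paper instead reduces, for each point of the intersection of the classes $b_i$ and $a_j'$ in $D$, to a fiber invariant of $(\PP^1,pt)$ in class $\ell f$, handles $s>0$ by observing the indeterminacy of the codimension-zero constraints can be absorbed by reparametrization only when $s+r\le 3$ (and then $\ell=1$ by a dimension count), and disposes of $s=0$, $\ell>1$ by Lemma~\ref{le:000}.
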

\begin{proof} Write $\be = \ell f +\al'$ where $\ell\ge 0$.
Suppose first
that there are no absolute constraints and that
$\al' \ne 0$. 
Choose an $\om$-tame almost complex structure $J$ on $Y$ that is invariant under the $S^1$ action in the fibers of the ruled manifold $Y\to D$.  The fixed point set of $S^1$ consists of the divisors $D',D^+$,   and hence it fixes the relative constraints.  The localization principle\footnote{
\cite{MT} gives a proof for symplectic nonrelative invariants. The same arguments work for relative invariants.}
  implies that 
the only contributions to
 $\bla\; | b_1,\dots,b_r\bra^{Y,D^+}_{0,\be,\un d}$ come from $S^1$-invariant elements of the moduli space of $J$-holomorphic curves.   Such curves would have to consist of $r$ (possibly multiply covered) spheres in the fiber of $Y\to D$  together with a connected genus zero curve in $D'$.    
Since $\be\cdot D' = \ell + \al'\cdot D'\ge 0$, we must have
$\al'\cdot D' \ge -\ell$.  Hence $c_1^Y(\be) \ge
 \ell + 
 c_1^{D'}(\al) = : \ell+c'$ and so, because we always count curves of dimension zero, equation (\ref{eq:dim}) implies that
 $$
 n + \ell+c' + r-3 + \de -(\ell-r) \le rn,
 $$
 where $\de$ is the sum of half the dimensions of the constraints $b_i$.
 
 Now observe that we may regularize the moduli space $\oMm\,\!^{D'}_{\al'}$
 of $\al'$-curves (considered as curves  in $D'$)  by perturbations 
 that are tangent to $D'$ and hence are $S^1$ invariant. 
For the given relative invariant to be nonzero the  space 
of $S^1$-invariant stable maps  made up of such
partially regularized  $\al'$-curves plus some fibers going through the constraints must be nonempty. Hence there must be
 generic $\al'$-spheres in $D'$ with $r$ marked points 
that meet the fibers through $b_1,\dots,b_r$. In other words, we need
$
n-1 + c' + r-3 \ge r(n-1) - \de.
$
But this contradicts the inequality found above. 

Now suppose that there are some absolute constraints. Since 
these may be represented by cycles that are invariant under
 the $S^1$ action, $S^1$ still acts on the space of all $J$-holomorphic stable maps through the constraints.  
Hence the localization principle again says that the invariant is zero
unless this moduli space has fixed points.    These would consist
of  some fibers through the relative  constraints as well  as some of the absolute constraints, plus some $\al'$-curves in $D'$ meeting all these fibers and going through the other absolute constraints. 
Hence the  moduli space of partially regularized $\al'$-curves in $D'$ must meet all the $b_i$ as well as the classes $a_j'$.
A dimension count similar to that above  shows that this is impossible.
This proves (i).

In case (ii) we are counting $\ell$-fold coverings of the fiber $2$-spheres.
  The number of fibers through the constraints is finite and nonzero only if  the intersection product in $H_*(D)$ of all the
classes $b_i$ and $a_j'$ is nonzero and lies in $H_0(D)$.  
For each point in this intersection, we are then reduced to 
a fiber invariant of the form
$$
\bla \PP^1,\dots, \PP^1 | pt,\dots,pt\bra^{\PP^1,pt}_{0,\ell f,\un d},
$$
where the absolute constraints  are $s$ copies of 
$\PP^1 = a_i\cap \pi^{-1}(pt)$ and the relative ones are $r$ points.
  If $s>0$ such an invariant is nonzero only if the total number $s+r$ of  constraints  is $\le 3$. For then the indeterminancy of the absolute
  constraint(s) can be absorbed by reparametrizations so that there is a finite number of equivalence classes of  stable maps satisfying the given  conditions.
  A dimension count then shows that $\ell = 1$.   If $s=0$ 
  the invariant vanishes when $\ell>1$ by Lemma~\ref{le:000}.
\end{proof} 
 
 \begin{example}\labell{ex:relD}\rm  The vanishing result in 
 Lemma~\ref{le:relD} need not hold when $\be\cdot D'<0$.
  To see this, let $D=\PP^3\#2\ov{\PP}\,\!^3$ 
  be the  $2$-point blow up of $\PP^3$.  Denote the classes of $\PP^2$ and the two exceptional divisors by $H, E_1,E_2\in H_4(D)$;
  write $\la: = [\PP^1] = H\cdot H$ and $\eps_i: = E_i\cdot E_i$
  for the corresponding generators of $H_2(D)$. (Thus $\eps_i$ is represented by a line  in $E_i\cong \PP^2$.)
 Let $L$ be the line bundle over $D$ whose first Chern class $c_1^L$  is Poincar\'e dual to $H-E_1-E_2$.  (Thus $L$ is the normal bundle of $D$ in 
 the two point blow up of 
$\PP^4$ when $D$ is identified with the proper transform of a hyperplane through the two blown up points.) Let $Y = \PP(L\oplus \C)$ as above, with the obvious complex structure $J_0$.
  
  Consider the class $2\al' +f\in H_2(Y)$ where $\al' = \la'-\eps_1'-\eps_2'\in H_2(D')$. We claim that for all $\rho\in H_4(D)$
\begin{equation}\labell{eq:YD}
\bla\;|\rho\bra^{Y,D^+}_{0,2\al'+f, (1)} = \frac{\rho\cdot_D\al}4.
\end{equation}
where $\al\in H_2(D)$ corresponds to $\al'$ and $\cdot_D$ denotes the intersection product in $D$.
One could prove this by using algebro-geometric localization, but 
 we shall take another approach.  

As in Lemma \ref{le:relD}, it suffices to calculate the
contribution to this invariant 
from the moduli space $\Vv$ of $\C^*$-invariant
$J_0$-holomorphic $(2\al'+f)$-curves in $Y$, where $J_0$ denotes the obvious complex structure on $Y$.  
 The elements of $\Vv$ consist of doubly covered $\al'$-spheres in $D'$  plus a fiber.
We can regularize the space of doubly covered  
$\al'$-spheres in $D'$, obtaining a
virtual moduli cycle 
 $
 \oMm_{2\al'}(D')^{[vir]}
 $
  consisting of a finite number of embedded
 spheres $C'={\rm im\,} u'$ in $D'$ that each  have a rational weight and are  $(J_0,\nu)$-holomorphic, i.e. they  satisfy
a perturbed Cauchy--Riemann equation of the form $\ov{\p}_{J_0}
 u' = \nu(J_0,u')$.   The sum of these weights  is well known to be 
 $$
\bla\;\bra^{D'}_{0,2\al}=\bla\;\bra^{\PP^3\#2{\ov{\PP}\,\!^3}}_{0,2(\la - \eps_1-\eps_2)} = 1/8,
 $$  
see for example Gathmann~\cite[Ex~8.5]{Ga}.  
Therefore, we need to see that each element $u'\in \oMm_{2\al'}(D')^{[vir]}$ gives rise to two curves in $Y$ in class $2\al'+f$ and through $\rho$.  This may be done by using the  gluing methods developed by Zinger \cite[\S3]{Z}. We shall give a mroe elementary argument that  exploits the fact that $Y$ is an $S^2$-bundle over $D$.
 
The perturbing $1$-form $\nu$ used to construct 
$\oMm_{2\al'}(D')^{[vir]}$ is a (multivalued) section of the (orbi)bundle 
$\Ee(D)\to\Ww(D)$,
 where $\Ww(D)$ is a neighborhood of the space of $J_0$-holomorphic
 $2\al'$-spheres in the space of all stable maps into $D$ in class $2\al'$,  and   
$\Ee(D)\to \Ww(D)$ is the  (orbi)bundle
whose fiber at $[\Si,v]$ is the space of antiholomorphic $1$-forms on 
the nodal Riemann surface $\Si$ with values in $v^*(TD)$.
There is a similar bundle $\Ee(Y)\to  \Ww(Y)$ 
for stable maps in class $2\al'+f$.  The projection $\pi:Y\to D$
induces a map $\Ww(Y)\to \Ww(D)$, and also for each stable map
$[\Si,v]\in \Ww(Y)$ a surjective bundle map 
$\pi_*:v^*(TY) \to (\pi\circ v)^*(TD).$

Consider the space $\Ga^\nu$ of (multi)sections $\nu^Y$ of  $\Ee(Y)\to  \Ww(Y)$
that are $\nu$-compatible, in the sense that
$$
\pi_*(\nu^Y(v)) = \nu(\pi\circ v),\qquad \mbox {for all }v\in \Ww(Y).
$$
Then, for every $(J_0,\nu^Y)$-holomorphic map $v:\Si\to Y$,
the composite $\pi\circ v$ is a $(J_0,\nu)$-holomorphic map to $D$.
Therefore each $(J_0,\nu^Y)$-holomorphic sphere in $\Ww(Y)$ lies over
a $(J_0,\nu)$-holomorphic sphere in $\Ww(D)$, i.e. an element in $\oMm_{2\al'}(D')^{[vir]}.$ 
This reduces the problem of counting the $(J_0,\nu^Y)$-holomorphic
spheres in $Y$ through the constraint $\rho$ to a problem in the $4$-dimensional spaces $X_{u'}: = u'^*(\pi^{-1}(C'))$, where $C': = {\rm im\,}u'\cong S^2.$  But $X_{u'}\cong S^2\times S^2$ and the class $2\al'+f$ pulls back to $[pt \times S^2]$. 
Moreover, one can check directly that the allowed perturbations of $\nu^Y$ are sufficient to regularize the curves in $Y$; more precisely, for generic $\nu^Y$, every 
$(J_0,\nu^Y)$-holomorphic
sphere in $X_{u'}$ is regular both in $X_{u'}$ and also (because $u'$ is regular) in $Y$.  Hence the number of such spheres 
though a generic point  in $X_{u'}$ is $1$. 
Since $\rho\cdot_Y [\pi^{-1}(C')] = 2 \rho \cdot_D\al$, the  
result follows. 

\end{example}

%
\section{The decomposition formula}\labell{s:GWu}

The  main tool  in the theory of relative invariants
 is the decomposition rule
of  Li--Ruan~\cite{LR} and (in a slightly different version) of Ionel--Parker~\cite{IP}.  
So suppose that the manifold $(M,\Om)$ is the fiber sum of $(X,D, \om_X)$ with $(Y,D^+, \om_Y)$, where the divisors
 $(D, \om_X)$ and $ (D^+,\om_Y)$ are symplectomorphic with dual normal bundles. 
 This means that there is a (real) codimension $1$ hypersurface $S$ in $(M,\om)$ such that the orbits of its characteristic foliation form a  fibration of $S$ by circles  such that the quotient space can be identified with $(D,\om_X)$.
We will only consider cases in which $S$ divides $M$ into two components, one symplectomorphic to $X\less D$ and the other to $Y\less D^+$.
 
Let us first assume 
   that the 
absolute constraints can be represented by cycles in $M$ 
that do not intersect $S$, i.e. that
 $a_i\in H_*(X\less D), i\le q,$ and $a_i\in H_*(Y\less D^+), q<i\le p$.   
 For simplicity we will assume throughout  
  that the map $H_2(M)\to H_2(X\cup_DY)$ is injective.  (This hypothesis is satisfied whenever $H_1(D)=0$, and means that there are no rim tori (cf.
  \cite{IP}).)  
   Further, let $b_i, i\in I,$ be a basis for $H_*(D)= H_*(D;\Q)$ with
dual basis $b_i^*$ for $H_*(D)$.  Then
the  decomposition formula 
has the following shape:
\begin{eqnarray}\labell{eq:DF}
&&\bla a_1,\dots,a_p\bra^M_{g,\be}\; =\\\notag
&&\;  \sum_{\Ga,\un d,({i_1},\dots,{i_r})}  
n_{\Ga,\un d} \bla a_1,\dots,a_q\,|\, b_{i_1},\dots,b_{i_r}\bra^{\Ga_1,X,D}_{\be_1, \un d}
\bla a_{q+1},\dots,a_p\,|\, b_{i_1}^*,\dots,b_{i_r}^*\bra^{\Ga_2,Y,D^+}_{\be_2, \un d}. 
\end{eqnarray}
Here we  sum with rational weights $n_{\Ga,\un d}$ over all decompositions $\un d$ of $d$, all possible
 connected labelled trees $\Ga$, and all possible sets 
 ${i_1},\dots,{i_r}$ of relative constraints.  Each $\Ga$ describes a possible combinatorial structure for
 a stable map that glues to give
a $\be$-curve of genus $g$.  Thus
$\Ga$ is a disjoint union $\Ga_1\cup \Ga_2$, where the graph $\Ga_1$ (resp. $\Ga_2$) describes the part of the curve lying in some $X_k$ (resp. some $Y_\ell$) and $\be_1$ (resp. $\be_2$) is the part of its label that describes the  homology class.   Hence the pair $(\be_1,\be_2)$
runs through all decompositions such that the result of gluing the two curves in the prolongations 
$X_{k_1}$ and $Y_{k_2}$ 
along their intersections with the relative divisors 
gives a curve in class $\be.$
Moreover, there is a bijection between the labels $\{(d_i,b_i)\in \NN\times H_*(D)\}$
of the relative constraints in $\Ga_1$ and those
$\{(d_i,b_i^*)\in \NN\times H_*(D^+)\}$ in $\Ga_2$. (These labels are  called
relative \lq\lq tails"  in \cite{HLR}.)  $\Ga_i$ need not be connected; if it is not, we define  $\bla\dots|\dots\bra^{\Ga_i}$ to be the product of the invariants defined by its connected components.

\begin{rmk}\labell{rmk:DF}\rm (i)
In the genus zero case, each component of $\Ga_1$ is a tree and has at most one relative tail in common with  each component of $\Ga_2$.
In many cases one can show that one side  is connected, so
 that the other side  has $r$ components, one for each relative constraint.
\SSS

\NI(ii) The number $n_{\ga,\un d}$  is the reciprocal of the number of
automorphisms of the corresponding labelled graph.  We shall not be more specific here, because it will be $1$ in all cases where we shall need to know its value.  
\SSS

\NI (iii)  If absolute constraints $a_i\in H_*(M)$ cannot be represented on one side of $S$ or the other, one must represent  each $a_i$ by a cycle that decomposes as $a_i^X\cup a_i^Y$, where 
$a_{i}^X, a_i^Y$
are cycles in the closure of  the appropriate component of $M\less S$ with equal boundaries $a_i^X\cap S = a_i^Y\cap S$ on $S$ that are unions of $S^1$ fibers.  
Then in (\ref{eq:DF}) one must also sum over all
subsets $I\subset \{1,\dots,n\}$, where the $\Ga_1$-curve goes through the 
absolute constraints $a_i^X, i\in I$ and the $\Ga_2$-curve goes through $a_j^Y, j\notin I$.  If there are symmetries among these constraints (eg if $a_1^X=a_2^X$), one must adjust the multiplicities $n_\Ga$ accordingly.
\end{rmk}

The decomposition formula can be used to compare absolute and relative invariants.  Part (i) of the following result
was proved in Hu~\cite{Hu} and Gathmann~\cite{Ga} (in the projective case) under slightly weaker assumptions. 
Also see  Lai~\cite{Lai}.

\begin{prop}\labell{prop:g0}  
Suppose that $D$ is a divisor in $(X,\om)$ such that for some $\be\in H_2(X)$   
$$
c_1^X(\al)>(n-3)(k-2)\;\mbox{ if }\;n\ge 3,\qquad  c_1^X(\al)>0
\;\mbox{ if }\; n=2,
$$
for all  $\al\in H_2(D)$
with $0<\om(\al)<\om(\be)$ and $\al\cdot D = -k<-1$.
Then: \SSS

\NI {\rm (i)}
If $\be\cdot D = 0$, then  
 for any $a_i\in H_*(X\less D)$,
 $$
 \bla a_1,\dots,a_k|\;\bra^{X,D}_{0,\be}=\bla a_1,\dots,a_k\bra^{X}_{0,\be}.
 $$

\NI {\rm (ii)}  If $\be\cdot D > 0$, then  
 for any $a_i\in H_*(X\less D)$
 $$
 \bla a_1,\dots,a_k| D,\dots,D\;\bra^{X,D}_{0,\be,(1,\dots,1)}
 =\bla a_1,\dots,a_k\bra^{X}_{0,\be}.
 $$
 
\end{prop}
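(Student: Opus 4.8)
The plan is to apply the decomposition formula \eqref{eq:DF} to the fiber sum $M = X \#_D Y$, where $Y = \PP(L_D \oplus \C)$ with $L_D$ the normal bundle of $D$, so that the fiber sum reconstructs $X$ itself (i.e.\ $M \cong X$ with $D^+$ playing the role of $D$ and $D'$ a parallel copy). Under this gluing every absolute class $a_i \in H_*(X \less D)$ survives on the $X$-side, and the formula writes $\bla a_1,\dots,a_k\bra^X_{0,\be}$ as a sum over labelled trees $\Ga = \Ga_1 \cup \Ga_2$, rational weights $n_{\Ga,\un d}$, decompositions $\be = \be_1 + \be_2$ and relative tails. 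The term in which $\Ga_2$ is empty (no curve on the $Y$-side, so $\be_2 = 0$, $r=0$ in case (i), or $r$ fibers carrying the tautological tails $D,\dots,D$ in case (ii)) reproduces exactly the relative invariant $\bla a_1,\dots,a_k \mid \cdot\,\bra^{X,D}_{0,\be}$ (resp. $\bla a_1,\dots,a_k \mid D,\dots,D\bra^{X,D}_{0,\be,(1,\dots,1)}$), with weight $n_{\Ga,\un d}=1$ by Remark~\ref{rmk:DF}(ii). So the statement reduces to showing that every \emph{other} term in the decomposition vanishes.

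The key step, then, is a dimension count showing that no stable map having a nontrivial $Y$-component can carry all the constraints. Fix such a $\Ga$, and let $\be_2 = \ell f + \al'$ be the $Y$-part of the homology class, with $\al \in H_2(D)$ the corresponding class in $D$; write $\al \cdot D = -k$ where $D$ here means the normal class, so that $\al' \cdot D' = \al \cdot D = -k$ in $Y$ after the identifications. When $\al' = 0$ the $Y$-component consists only of fibers, and one checks (as in Lemma~\ref{le:relD}(ii)) that in case (i) such components cannot appear with a relative tail pulled back from $D\less$(nothing), forcing $\be_2=0$, while in case (ii) the only surviving fiber contributions are single fibers carrying the class $[D]$, which are precisely the terms already accounted for. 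When $\al' \ne 0$, the $Y$-side invariant $\bla \cdots \mid b^*_{i_1},\dots,b^*_{i_r}\bra^{Y,D^+}_{0,\be_2,\un d}$ is nonzero only if its moduli space has the expected dimension; by \eqref{eq:dim} this pins down the dimension $\de$ of the relative constraints $b^*$. Feeding this back into the $X$-side invariant $\bla a_1,\dots,a_k \mid b_{i_1},\dots,b_{i_r}\bra^{X,D}_{0,\be_1,\un d}$, one finds that the total index available to carry the $a_i$ falls short — unless $\al'\cdot D' \ge -1$, which is excluded precisely by the hypothesis $c_1^X(\al) > (n-3)(k-2)$ (for $n\ge 3$) or $c_1^X(\al)>0$ (for $n=2$) when $\al\cdot D = -k < -1$. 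The Chern-class inequality is exactly what is needed to compensate for the negative normal intersection when translating between $c_1^{D}$ and $c_1^X$ (using $c_1^X(\al) = c_1^D(\al) + \al\cdot D$ and the analogue for $Y$), and to keep the combined index strictly below $\mathrm{ind}_X\,\be$; the case $\al \cdot D \ge -1$ is already handled by Lemma~\ref{le:relD}.

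The main obstacle I expect is bookkeeping: one must run the dimension estimate uniformly over \emph{all} labelled trees $\Ga$, not just over a single $Y$-component, since $\Ga_2$ may be disconnected with several components of classes $\al'_1, \dots, \al'_m$ whose negative intersections with $D'$ could in principle conspire. The resolution, as in Remark~\ref{rmk:g0}, is that the intersection pattern is a tree, so the components meet transversally and the indices add with no extra correction; one then applies the per-component estimate and sums, noting that each component with $\al'_j \ne 0$ strictly decreases the available index while the fiber components contribute nonnegatively only in the controlled way of case (ii). A second, more technical point is verifying that the extremal term really does have weight $1$ and coincides on the nose with the relative invariant — this is where one invokes Remark~\ref{rmk:DF}(i) (each $\Ga_1$ component has at most one relative tail in common with each $\Ga_2$ component) together with the injectivity of $H_2(M) \to H_2(X \cup_D Y)$ assumed throughout, so that the homology decomposition $\be = \be_1 + \be_2$ is unambiguous.
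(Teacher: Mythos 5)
Your proposal follows essentially the same route as the paper: decompose $X$ as the fiber sum of $(X,D)$ with $(Y,D^+)=(\PP(L\oplus\C),\PP(L\oplus\{0\}))$, put the constraints on the $X$ side, identify the $\Ga_2=\emptyset$ term (resp.\ the term with $d$ trivial fibers in case (ii)) with the relative invariant, and eliminate every other term by combining Lemma~\ref{le:relD} (which forces any non-fiber $Y$-component to satisfy $\al'\cdot D'\le -2$) with a dimension count using $c_1^Y(\al')=c_1^X(\al)$ and the Chern-class hypothesis. The only difference is organizational: the paper runs the count directly on the $Y$-factor alone, showing its dimension-zero condition is already contradictory when $k_j\ge 2$, whereas you assert the resulting index shortfall without writing out the inequality; carrying it out as in the paper closes that small remaining step.
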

\begin{proof} (i) Let $L$ be the normal bundle of $D$.
Decompose $X$ into the sum of $(X,D)$ with the ruled manifold
 $(Y,D^+)$ where $Y:= \PP(L\oplus \C)$  and $D^+: = \PP(L\oplus \{0\})$, so that $D^+$ has normal bundle $L^*$.  We put the constraints into $X$.
There is one term in the
decomposition formula with $\Ga_2=\emptyset$ which contributes
$\bla a_1,\dots,a_k|\;\bra^{X,D}_{0,\be}$ to the absolute invariant.
Hence we need to see that there are no other terms.

Observe first that, even if $k=0$ it is impossible for there to be a nonzero term with $\Ga_1=\emptyset$. For then  the $\Ga_2$-invariant must be $\bla\;|\;\bra^{Y,D^+}_{0,\be}$, which vanishes  
by Lemma~\ref{le:relD} since $\be_2\cdot D'=\be\cdot D=0$.

 Therefore, suppose that there is 
a nonzero term with nontrivial $\Ga_1$-curve and with 
 the $\Ga_2$-curve  in the nonzero class $\be_2$.   
As in Lemma~\ref{le:relD} denote
 by $\be_{2,j}: = \ell_j f + \al_j'$  the classes of its connected components, where $\al'_j\in H_2(D')$.  
 If 
 $\al_j'\ne0$ then $\ell_j\ne 0$  because the whole curve is connected.  Hence Lemma~\ref{le:relD} implies that 
 $\be_{2,j}\cdot D' = 
 \ell_j +\al_j'\cdot D'<0$. Therefore
  $k_j: = -\al_j'\cdot D'\ge 2.$

We now show that this is ruled out by our dimensional hypothesis.
 Suppose that  the $\be_{2,j}$-curve has $r_j$ relative constraints with total (complex) dimension $\de_{bj}$. Then, because $c_1^Y(\al_j') =  c_1^X(\al_j') $,
 the dimensional condition
 $$
 n + 2\ell_j + c_1^X(\al_j') + r_j-3 +\de_{bj} - (\ell_j-r_j)-r_jn=0
 $$
 must hold.
Suppose now that $n\ge 3$.  Since $k_j>\ell_j\ge r_j\ge1$ the LHS decreases strictly if we replace $\ell_j$ by $r_j$, $\de_{aj}$ by $0$ and $c_1^X(\al_j')$ by $(n-3)(k_j-2)$.  Hence (dropping the subscripts $j$) we must have
$$
n + 3r-3 + (n-3)(k-2) -rn = (n-3)(k-r-1)< 0.
$$
 which is impossible.   A similar argument works when $n=2$. This proves (i).
 
 Now consider (ii).  There is a term in the decomposition formula 
for $\bla a_1,\dots,a_k\bra^{X}_{0,\be}$
in which a connected  $\Ga_1$-curve  in class $\be$  meeting $D$ transversally in $d=\be\cdot D$ arbitrary points is joined to
$d$ disjoint fibers in $Y$.   Again we must show that there are no other nonzero terms.
It follows from
 Lemma~\ref{le:relD} (ii) that any component in $\Ga_2$ consisting only of a fiber class $d_i f$ must have $d_i=1$. Hence, if there is another term one of the components of the $\Ga_2$ curve must lie in some class $\be_{2,j} = d_jf+\al_j'$ where $\al_j'\ne 0$.  But these can be ruled out just as in case (i).
  \end{proof}

\begin{rmk}\label{rmk:al}\rm (i)    As noted by Hu \cite{Hu}, this lemma applies when $D$ is the blow up of a point or of a curve $S$ with $c_1(S)\ge 0$.
\SSS


\NI (ii)
Maulik--Pandharipande~\cite{MP} show that the invariant
$\bla\;|b_1^*,\dots,b_r^*\bra^{Y,D^+}_{0,\ell f+\al', \un d}$ may be calculated  in terms of suitable descendent invariants of $D'$ 
in class $\al'$.  Hence in the above lemma  it also suffices  to restrict the assumption on 
$\al\cdot D$ to classes $\al$ for which the descendent GW invariants
in $D$ do not all vanish.
\end{rmk}

We now give some examples to illustrate the difference between 
the absolute and relative invariants.

It is easy to find examples with $\be\cdot D<0$ for which the two invariants are different since the relative invariant vanishes
by definition, while the absolute invariant might not.  For example, consider
$X = S^2\times S^2$ with $\al_1:=[S^2\times pt]$ and $\al_2:=
[pt\times S^2]$, and let $D$ be the antidiagonal in class $\al_1-\al_2$. Then 
$$
\bla pt|\;\bra^{X,D}_{0,\al_1}=0,\qquad \bla pt\bra^{X}_{0,\al_1}=1.
$$ 

A similar phenomenon may happen when $\be\cdot D=0$.  We first  
give a higher genus case and then in Proposition~\ref{prop:=0} a genus zero example.

\begin{example}\labell{ex:g1}\rm {\it Different absolute and relative genus one invariants when $\be\cdot D=0$:}\SSS

 Let $X = \PP(L\oplus \C)$ where 
$L\to T^2$ is a holomorphic line bundle of degree $1$
and put 
$D = \PP(L\oplus\{0\})$ the section of self-intersection $-1$.
Let $\be = D+f$, the class of the section $D^+: = \PP(\{0\}\oplus\C)$.
Then $\be\cdot D = 0$. The dimension of the
moduli space $\oMm_{1,1}(X,\be,J)$  of holomorphic tori in $X$ with one marked point is $2$,
and so there should a finite number of such tori through a generic point. 
We claim that:
\begin{equation}\labell{eq:g1}
\bla pt\bra^X_{1,\be}=2;\qquad \bla pt|\;\bra^{X,D}_{1,\be}=1.
\end{equation}

If $J$ is the obvious complex structure 
 on the ruled surface $X$   then
there is a $1$-parameter family of tori  $z\mapsto [\la\si(z):1]$ in class 
$\be$, where $\si$ is a holomorphic section of $L$ and $\la\in \C$.  All these tori 
go through the point $p$ where $D^+$ meets the fiber over the point $z_0$ where $\si$ vanishes.    
The Riemann--Roch theorem implies that these are all regular curves
 because they are embedded with self-intersection $>0$.
  There are infinitely many such tori through $p$, but
 just one through any other point $q\in X\less D$.  However there is also a reducible $\be$-torus through each point, the union of $D$ with a fiber of $X\to D$. 
 Thus this standard $J$ 
is {\it not} regular for tori in class $\be$.  In fact, one can show
that  $\bla pt\bra^X_{1,\be}=2$, i.e.
the family of reducible curves also contributes $1$ to this invariant. 
(This  can be checked by comparing with the Seiberg--Witten invariants; cf Li--Liu~\cite{LL}. Or one could make a gluing argument as in Example \ref{ex:relD}.)

To calculate the relative invariant $\bla pt|\;\bra^{X,D}_{1,\be}$,
 consider the relative moduli space $\oMm_{1,\be}(X,D;J)$ for the standard complex structure on $X$. (Note that
we must use  an almost structure 
 for which $D$ is holomorphic.) Then the prolongation $Q = \PP(\C\oplus L^*)$ can be identified with the complex manifold  $X$ in such a way
that $(D_0, D_\infty)$ correspond to $(D^+, D)$.  Hence the only holomorphic sections of $Q$ are tori through the point $z_D$ in $D_0$ lying over $z_0$. 
Therefore there is only {\it one} curve in  $\oMm_{1,\be}(X,J)$ with two levels;
it consists of the fiber over $z_0$ together with a torus in $Q$ through $z_D$.
Therefore, there is just one element of $\oMm_{1,\be}(X,D;J)$  that goes through a generic point of $X\less D$, and this element is regular.
Hence $\bla pt|\;\bra^{X,D}_{1,\be}=1$. This proves (\ref{eq:g1}).

Similarly, $\bla \;|pt\bra^{X,D^+}_{1,\be,(1)}=1$. The best way to see this is to take a generic complex structure for which $D^+$ is holomorphic.  Then because the  absolute GW invariant is $2$ there is one more holomorphic torus through each point of $D^+$.  This set of tori form the main stratum of the moduli space.   The curve $D^+$ itself does not appear  and, as before, there is only one section in $Q$. Since this may well not go through  the given point constraint on $D_{1\infty}$ it does not contribute to the invariant.

On the other hand $\bla pt| D^+\bra^{X,D^+}_{1,\be,(1)}=2$, since if $J$ is a generic complex structure for which $D^+$ is holomorphic there are precisely two holomorphic tori in class $\be$ through any point not on $D^+$ that each meet $D^+$ transversally.

We now show that {\it these results are  consistent with the decomposition formula.}  Let us calculate
$
\bla pt\bra^{X}_{1,\be}
$
 by thinking of $X$ as the connected sum of $(X,D)$ with the ruled surface $(Y,D^+)$, defined as in Proposition~\ref{prop:g0}.  Since  $(Y,D^+)$ 
can be identified with $(X,D^+)$ we have
$
\bla \;| pt\bra^{Y,D^+}_{1,\be,(1)}=1.
$
Therefore, if we put the point constraint into $X$, besides the term
$$
\bla pt|\;\bra^{X,D}_{1,\be}=1
$$ with $\Ga_2=\emptyset$,
there also is a term in the
decomposition formula in which
the curve in $(X,D)$ is  the sphere 
fiber through $pt$, and the curve in the ruled surface $Y$ is 
 a torus in class $\be$ meeting $D^+$ in a point constraint.   Thus this contribution is the product
$$
\bla pt|D\bra^{X,D}_{0,f,(1)}\bla\; | pt\bra^{Y,D^+}_{1,\be,(1)}=1.
$$
Note that here the constraints along the divisor $D\equiv D^+$ are dual, as required. Thus
$$
\bla pt\bra^{X}_{1,\be}\; =\; \bla pt|\;\bra^{X,D}_{1,\be} \;+\; 
\bla pt|D\bra^{X,D}_{0,f,(1)}\bla\; | pt\bra^{Y,D^+}_{1,\be,(1)}\;=2.
$$

One can also apply the decomposition formula with the point 
constraint in $Y$.  Then there is just one nonzero term, and we find
$$
\bla pt\bra^X_{1,\be}=\bla  \;| pt\bra^{X,D}_{0,f,(1)}\;\bla pt | D^+\bra^{Y,D^+}_{1,\be,(1)} = 2,
$$
as before.\hfill$\Box$
\end{example}

\subsection{A genus zero example}

In this section we give a genus zero example where  
$\be\cdot D = 0$ but the relative and absolute invariants are different.
Throughout 
$X$ is  the blow up of $\PP^4$  at two points, and $D$ is the proper transform of a hyperplane through these two points.  Thus $D = \PP^3\#2{\ov \PP}\,\!^3$  as in 
Example~\ref{ex:relD}.   As a homology class in $X$, $D = H^X-E_1^X-E_2^X$ where $H^X$ is the hyperplane in $X$ and $E_j^X$ are the exceptional divisors.
We define $\la$ to be the class of the line in $\PP^4$ and  $\eps_j$ to be the class
of the line in $E_j$, considered where appropriate
as elements of $H_2(D)$ or $H_2(X)$.   We shall use the self-dual basis 
\begin{equation}\labell{eq:bas}
pt, \;\;\la,\;\; \eps_1,\;\;\eps_2, \;\;\la^*: = \pi, \;\;\eps_1^*, \;\;\eps_2^*,\;\; pt^*=D
\end{equation}
 for $H_*(D)$.  Thus $\pi: = D\cap H^X$ and
 $\eps_j^*= -D\cap E_j^X$ are disjoint (and equal to the classes $H, -E_j$ in the notation of Example~\ref{ex:relD}).
 
  We need two auxiliary lemmas.  The first is due to Gathmann.
It compares certain Gromov--Witten invariants in $X$ and $\PP^4$, and
 expresses the fact that these invariants are enumerative, i.e. they count what one expects them to count.
 
 \begin{lemma}\labell{le:X}  Let $X=\PP^4\#2{\ov \PP}\,\!^4$ be as above and $a_1,\dots,a_m\in H_{<8}(\PP^4)\subset H_{<8}(X)$.
 
 \NI {\rm (i)} For any $k>0$,  $\bla a_1,\dots,a_m\bra^X_{0,k\la} =  
 \bla a_1,\dots,a_m\bra^{\PP^4}_{0,k\la}.$\SSS
 
 \NI {\rm (ii)}   For any $k>0$ and $i=1,2$,  $\bla a_1,\dots,a_m\bra^X_{0,k\la-\eps_i} =  
 \bla pt, a_1,\dots,a_m\bra^{\PP^4}_{0,k\la}.$\SSS
 
 \NI {\rm (iii)}   For any $k>0$,  $\bla a_1,\dots,a_m\bra^X_{0,k\la-\eps_1-\eps_2} =  
 \bla pt, pt, a_1,\dots,a_m\bra^{\PP^4}_{0,k\la}.$
 \end{lemma}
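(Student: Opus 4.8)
The plan is to obtain all three identities as instances of Gathmann's comparison theorem for Gromov--Witten invariants under point blow-ups \cite{Ga}.  The geometric content is that $k\la$, $k\la-\eps_i$, and $k\la-\eps_1-\eps_2$ are precisely the classes in $X=\PP^4\#2{\ov \PP}\,\!^4$ of the proper transforms of degree $k$ rational curves in $\PP^4$ that pass simply through, respectively, neither, exactly one, or both of the two blown-up points $p_1,p_2$, and the point insertions on the right-hand sides just record these incidences.  So the three statements are really the assertion that these particular invariants of $X$ are enumerative and are computed by the corresponding enumerative problem downstairs in $\PP^4$.

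Concretely I would proceed as follows.  Write $\pi:X\to\PP^4$ for the blow-down, an isomorphism over $\PP^4\less\{p_1,p_2\}$, and note that for $\be\in\{k\la,\ k\la-\eps_i,\ k\la-\eps_1-\eps_2\}$ one has $\be\cdot E_i^X\in\{0,1\}$, this number being the multiplicity with which the $\pi$-image of a $\be$-curve meets $p_i$.  Composing stable maps to $X$ with $\pi$ and stabilizing gives the natural comparison between the moduli problems $\oMm_{0,m}(X,\be)$ and $\oMm_{0,m}(\PP^4,k\la)$ (with one extra marked point for each $\eps_i$ in $\be$).  I would then fix cycles $a_1,\dots,a_m$ representing the pulled-back classes in general position and run the enumerative count in $\PP^4$: since $\PP^4$ is convex, $\oMm_{0,\bullet}(\PP^4,k\la)$ is smooth of the expected dimension, so the degree $k$ rational curves through the $a_j$ and (when required) through generic points $p_i$ form a finite set; for generic $p_i$ each such curve is smooth at each $p_i$, meets neither $p_i$ beyond the prescribed incidences, and has no component contracted to $p_1$ or $p_2$ (hence none that gets absorbed into an $E_i$ after blow-up).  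Each therefore lifts uniquely to a $\be$-curve in $X$ meeting the relevant $E_i$ transversally in a single point, and conversely every $\be$-curve in $X$ through the $a_j$ pushes down to such a curve; this bijection of (rigid, transversally cut out) solution sets yields the equality of counts, which is exactly (i), (ii), (iii).

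The step I expect to be the main obstacle --- and the reason this is a theorem rather than a one-line consequence of convexity --- is that $X$ itself is \emph{not} convex, so $\oMm_{0,m}(X,\be)$ may be obstructed and may carry boundary strata (stable maps with multiply covered or higher-degree components inside $E_1$ or $E_2$, or with higher-order tangency to some $E_i$) that a priori contribute to $[\oMm_{0,m}(X,\be)]^{[vir]}$; indeed even an honest $\be$-curve meeting an $E_i$ transversally at one point has $u^*TX$ differing from the pullback of $T\PP^4$ by a length-$(n-1)$ skyscraper, so its obstruction group need not vanish automatically.  One must show all such contributions vanish for the classes at hand, and this is precisely the content of Gathmann's theorem in the range where only multiplicity $\le1$ along the exceptional divisors occurs: there the excess loci over which the obstruction bundle is supported drop strictly below the expected dimension once the general-position constraints $a_j$ are imposed.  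I would either invoke \cite{Ga} directly for this, or reprove it here by making the dimension estimates above quantitative (checking the relevant obstruction spaces vanish along the good locus, and that every degenerate configuration lies in a stratum of too small dimension to meet the $a_j$).  Note the argument is insensitive to $m$ and uses only that the $a_j$ can be positioned generically away from $p_1,p_2$; the hypothesis $a_j\in H_{<8}(\PP^4)$ merely keeps the constraints nontrivial.
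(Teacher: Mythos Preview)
Your proposal is correct and matches the paper's approach exactly: the paper gives no proof of this lemma at all, simply attributing it to Gathmann~\cite{Ga} with the remark that ``these invariants are enumerative, i.e.\ they count what one expects them to count.'' Your sketch of the underlying mechanism (the bijection between proper transforms and curves through the blown-up points, together with the vanishing of excess contributions from strata lying in the exceptional divisors when multiplicities are at most~$1$) is an accurate summary of what Gathmann proves, and is more than the paper itself supplies.
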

 
 The first part of the next lemma is also well known; we give the proof for completeness.

 \begin{lemma}\labell{le:P4} 
 Let $X,D$ be as above.  Then
 \SSS
 
 \NI
 {\rm (i)}  
 $\bla pt,pt,\la,\pi,\pi,\pi\bra^{\PP^4}_{0,2\la} =4$.\SSS
 
 \NI {\rm (ii)} $\bla pt,pt,\pi,\pi,\pi\;|\;\la,D\bra^{X,D}_{0,2\la,(1,1)} =8$.
  \end{lemma}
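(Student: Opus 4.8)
For these homogeneous targets the genus zero invariants are enumerative, so I would argue geometrically. Part (i) counts conics in $\PP^4$ through two general points $p_1,p_2$, meeting a general line $\la$, and meeting three general $2$--planes $\pi_1,\pi_2,\pi_3$. Such a conic spans a plane $P$ containing $\ell_0:=\overline{p_1p_2}$; the planes through $\ell_0$ form a $\PP^2$, over which the conics through $p_1,p_2$ form a $\PP^3$--bundle. The condition that $P$ meet $\la$ cuts the base to a pencil and each fiber to a $\PP^2$ (the conic must contain the point $P\cap\la$); since every plane $P$ meets each $\pi_j$ in a point, the remaining three incidences impose, fiberwise over this $\PP^1$ of planes, three linear conditions on $\PP^2$, and the count is the degree of the resulting determinant. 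I would check this degree is $4$; equivalently this is one of the classically known genus zero degree $2$ Gromov--Witten numbers of $\PP^4$.

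For part (ii) I would first reduce to $\PP^4$. Since $\be=2\la$ has $\be\cdot E_i^X=0$ and the absolute constraints are in general position, no level $0$ component of a relevant stable map can meet $E_1^X\cup E_2^X$ --- such a component would force a conic through one or both blown--up points and through $p_1,p_2$ to meet the three general $2$--planes, which is impossible by a dimension count. So after blowing down, the level $0$ part is a plane conic in $\PP^4$ and the relative divisor is the hyperplane $\Pi$ that is the image of $D$, although the rubber still fibers over $D=\PP^3\#2\ov{\PP}^3$ and so retains the exceptional classes. The open stratum of the relative moduli space then consists of honest conics meeting $\Pi$ transversally at two points, one of which lies on a line $\ell$ representing $\la\in H_2(D)$; since ``$C$ meets the line $\ell\subset\Pi$'' is literally the condition ``a point of $C\cap\Pi$ lies on $\ell$'', this stratum is exactly the enumerative problem of part (i) and contributes $4$.

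The remaining contribution of $4$ comes from the boundary, and this is the heart of the matter: the normal bundle $L=N_D$ is \emph{not} nonnegative, since $c_1(L)\cdot(\la-\eps_1-\eps_2)=D\cdot(\la-\eps_1-\eps_2)=-1$, so Remark~\ref{rmk:g0} does not apply and multi--level stable maps genuinely contribute. One could try to identify these two--level configurations directly --- level $0$ carrying all absolute constraints, the rubber projecting onto the negative class $\la-\eps_1-\eps_2$ of $D$ --- and check that the resulting stratum is zero--dimensional, regular, in bijection with the $4$ conics above, and counted once each. The route I would actually take uses the decomposition formula of \S\ref{s:GWu}: by Lemma~\ref{le:X}(i) and part (i), $\bla pt,pt,\la,\pi,\pi,\pi\bra^{X}_{0,2\la}=4$; decomposing $X$ as the fiber sum of $(X,D)$ with $(Y,D^+)$, $Y=\PP(L\oplus\C)$, and using Lemma~\ref{le:relD} to annihilate all but finitely many terms on the $(Y,D^+)$ side, one gets a single linear relation whose only unknown is $\bla pt,pt,\pi,\pi,\pi\,|\,\la,D\bra^{X,D}_{0,2\la,(1,1)}$, which then solves to $8$. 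Either way the main obstacle is the same: pinning down exactly which multi--level (respectively $(Y,D^+)$) terms survive and with what rational weight --- precisely the subtle rubber calculus this paper is devoted to.
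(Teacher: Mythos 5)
For part (i) your reduction is geometrically sound and close in spirit to the paper's first step (every conic through the two points and meeting $\la$ lies in the $\PP^3$ they span), but you stop exactly where the work begins: ``I would check this degree is $4$'' is not an argument. The paper completes the count by reducing to $\bla pt,pt,\la,\la,\la,\la\bra^{\PP^3}_{0,2\la}$ and evaluating it with two applications of the splitting axiom to the fixed--cross--ratio invariant $\bla pt,pt,\pi,\pi;\la,\la,\la\bra^{\PP^3,\{1,2,3,4\}}_{0,2\la}$, which gives $6=(\mbox{answer})+2$. Your determinantal computation on a $\PP^2$-bundle over $\PP^1$ could presumably be made to work, but you would also need to handle transversality and the degenerate conics, none of which is addressed.

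Part (ii) is where the substantive gap lies, and your picture of where the $8$ comes from does not match the paper's. The paper first proves the identity
$$
\bla pt,pt,\pi,\pi,\pi\,|\,\la,D\bra^{X,D}_{0,2\la,(1,1)}=\bla pt,pt,\pi,\pi,\pi,\la,D\bra^{X}_{0,2\la},
$$
and then gets $8$ from the divisor axiom ($2\la\cdot D=2$) together with Lemma~\ref{le:X} and part (i): $2\times 4=8$. Thus the doubling is produced by the unconstrained relative insertion $D$ via the divisor axiom, and the content of the identity above is precisely that the potentially dangerous degenerate terms \emph{vanish}; your asserted split ``$4$ from the open stratum plus $4$ from multi-level boundary maps'' presupposes the answer and is never substantiated. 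Your fallback route through the decomposition formula also mis-cites Lemma~\ref{le:relD}: that lemma only kills $\Ga_2$-classes with $\be_2\cdot D'\ge 0$, whereas the terms that threaten to survive here are exactly those with $\be_2\cdot D'<0$ (multiples of $\la'-\eps_1'-\eps_2'$ plus fibers, whose invariants are nonzero by Example~\ref{ex:relD}). These must be excluded by the separate observation the paper makes -- that the complementary $\Ga_1$-class would then contain components in classes $p\eps_j$ which cannot meet the absolute constraints, since $\pi$ can be represented by a pseudocycle disjoint from the exceptional loci. Without that step, and without the combinatorial weights you explicitly defer, your ``single linear relation'' is not established, so neither route in your proposal actually yields the value $8$.
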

 
 \begin{proof}  (i)  Put the constraints $pt,pt, {\it line}$ in general position and let $P$ be the $3$ space containing them.  Then every conic (i.e. degree $2$ 
 holomorphic curve) through these constraints intersects $P$ in three points and so must lie entirely in $P$.  It follows readily that 
 $$
 \bla pt,pt,\la,\pi,\pi,\pi\bra^{\PP^4}_{0,2\la}  =
 \bla pt,pt,\la,\la,\la,\la\bra^{\PP^3}_{0,2\la}.
 $$
 Now calculate the invariant by applying the splitting rule 
 (cf. \cite[Ch~7.5]{MS}) to
 $$
 I= \bla pt,pt,\pi,\pi;\la,\la,\la\bra^{\PP^3,\{1,2,3,4\}}_{0,2\la}, 
 $$
 where the superscript $\{1,2,3,4\}$ means that the cross ratio of the first four marked points is fixed and we have dropped the subscript $H$.  If the constraints are split as $pt,pt$ and $\pi,\pi$ then we find
\begin{eqnarray*}
 I &=& \bla pt,pt,\la,\la,\la,\la\bra^{\PP^3}_{0,2\la}\;
  \bla \la^*,\pi,\pi\bra^{\PP^3}_{0,0} +
   \bla pt,pt,\pi\bra^{\PP^3}_{0,\la}\;
  \bla\pi^*,\la,\la,\la\bra^{\PP^3}_{0,\la}\\
  &=& \bla pt,pt,\la,\la,\la,\la\bra^{\PP^3}_{0,2\la} + 2,
\end{eqnarray*}
because $\la^*=\pi$ and $\bla\la,\la,\la,\la\bra^{\PP^3}_{0,\la}=2$ by \cite[Example~7.1.16]{MS}.
On the other hand if they are split with $pt, \pi$ in each factor then we get
 $$
 I = 6
 \bla pt,\pi, \pi,\la,\la\bra^{\PP^3}_{0,\la}\;
  \bla pt,\pi, \pi^*,\la\bra^{\PP^3}_{0,\la} = 6,
  $$
  where the second equality uses the fact that 
  $ \bla pt,\la,\la\bra^{\PP^3}_{0,\la}=1$ by 
  \cite[Example~7.1.14]{MS}.
This proves (i). 
   
  Now consider (ii).  We first claim that 
$$
\bla pt,pt,\pi,\pi,\pi\;|\;\la,D\bra^{X,D}_{2\la,(1,1)}=
\bla pt,pt,\pi,\pi,\pi,\la,D\bra^{X}_{2\la}.
$$
This does not immediately follow from
Proposition~\ref{prop:g0}(ii)
since $D$ does not satisfy the conditions of this lemma for all classes $\al$.
However, all we need is that
 $c_1^X$  is sufficiently positive on
the classes  $\al$ that could appear in a decomposition $(\be_1,\be_{2})$
of the class $2\la$.     But this condition is satisfied because, if we write
$\be_{2} = \ell f + s'\la' - m_{1}'\eps_1'-m_ {2}'\eps_2'$, we must have $m_1'=m_2'=0$.
(Otherwise there would be a connected component in $\Ga_1$ in some class $p\eps_j$ for $p\ge 1$.  But such a class cannot be controlled by 
the available absolute constraints since $\pi$ can be represented 
by a pseudocycle that is disjoint from the $\eps_j$-curves.\footnote
{
One must be careful here since there are nonzero invariants in class $\eps_j$.
For example, if  $\eps_j, \eps^*_j$ denote the line and $2$-plane in the exceptional divisor $E_j^X$ in $X$
$\bla \eps^*_j,\eps^*_j\;|\;\eps_j\bra^{X,D}_{0,\eps_j}=
\bla \eps_j,\eps_j,pt\bra^{E_j^X}_{0,\eps_j}=1
$.})
Therefore
\begin{eqnarray*}
\bla pt,pt,\pi,\pi,\pi|\la,D\bra^{X,D}_{2\la,(1,1)}&=&
\bla pt,pt,\pi,\pi,\pi,\la,D\bra^{X}_{2\la}\\
&=&
2\bla pt,pt,\pi,\pi,\pi,\la\bra^{X}_{2\la},
\end{eqnarray*}
   where the last equality holds by the divisor axiom 
   and the fact that $2\la\cdot D = 2$. Now apply Lemma~\ref{le:X} to reduce this to an invariant in $\PP^4$ and use (i).
 \end{proof}

We now give an explicit example in genus zero where the absolute and relative invariants differ. 

\begin{prop}\labell{prop:=0} Let $(X,D)$ and $\pi$ be as in the previous lemma,
and let 
$\be=4\la -2\eps_1-2\eps_2$.  Then
$$
\bla pt,pt,\pi,\pi,\pi\bra^{X}_{\be} \ne 
\bla  pt,pt,\pi,\pi,\pi|\;\bra^{X,D}_{\be}.
$$
\end{prop}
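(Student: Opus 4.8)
The plan is to compute both sides separately and exhibit a numerical discrepancy. First I would observe that $\be\cdot D = (4\la-2\eps_1-2\eps_2)\cdot(H^X-E_1^X-E_2^X) = 4 - 2 - 2 = 0$, so both invariants have the same expected dimension and the relative moduli space is well-defined (no relative marked points, $\un d$ empty). For the absolute invariant $\bla pt,pt,\pi,\pi,\pi\bra^X_{\be}$, I would invoke Lemma~\ref{le:X}(iii) to reduce to $\PP^4$: since all five constraints $pt,pt,\pi,\pi,\pi$ lie in $H_{<8}(\PP^4)$, we get $\bla pt,pt,\pi,\pi,\pi\bra^X_{0,4\la-2\eps_1-2\eps_2} = \bla pt,pt,pt,pt,\pi,\pi,\pi\bra^{\PP^4}_{0,4\la}$, wait --- Lemma~\ref{le:X}(iii) as stated handles $k\la-\eps_1-\eps_2$, so for the class $4\la-2\eps_1-2\eps_2$ one needs the analogous statement for multiplicity-two blowup classes, or one first reduces via the decomposition/enumerativity argument in Gathmann. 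In any case the absolute invariant is a concrete rational (indeed integer) number obtainable by standard recursions (WDVV/first reconstruction) from $\PP^4$ data.

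\textbf{Computing the relative invariant.} For $\bla pt,pt,\pi,\pi,\pi|\;\bra^{X,D}_{\be}$ I would apply the decomposition formula, writing $X$ as the symplectic sum of $(X,D)$ with the ruled manifold $(Y,D^+)$, $Y=\PP(L\oplus\C)$, exactly as in Proposition~\ref{prop:g0} and Example~\ref{ex:relD}. The term with $\Ga_2=\emptyset$ gives precisely $\bla pt,pt,\pi,\pi,\pi|\;\bra^{X,D}_{\be}$; so the difference between absolute and relative invariants equals the sum of the remaining terms, i.e. those with a nontrivial $\Ga_2$-curve in $Y$. The key point --- and the reason this differs from the positive-normal-bundle case of Proposition~\ref{prop:g0} --- is that $D$ here does \emph{not} satisfy the positivity hypothesis of that proposition: the relevant class to watch is $\al'=\la'-\eps_1'-\eps_2'$, for which $\al'\cdot D' = 1-2 = -1 < 0$, wait, $\al\cdot D = (\la-\eps_1-\eps_2)\cdot(H-E_1-E_2) = 1-1-1 = -1$, so in fact $\al\cdot D'=-1$ is only $-1$, not $\le -2$; but the relevant doubled class $2\al'$ has $2\al'\cdot D' = -2$, and this is exactly the class appearing in Example~\ref{ex:relD}. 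So I would isolate the decomposition term in which $\Ga_2$ carries a $\C^*$-invariant curve in class $2\al'+f$ (a doubly-covered $\al'$-sphere in $D'$ plus a fiber), whose relative invariant $\bla\;|\rho\bra^{Y,D^+}_{0,2\al'+f,(1)} = (\rho\cdot_D\al)/4$ was computed in Example~\ref{ex:relD}; here $\rho$ ranges over the self-dual basis of $H_4(D)$ and gets paired, via a relative tail, with a $\Ga_1$-curve in $(X,D)$ in the complementary class $\be-(2\al'+f)\!\downarrow = 4\la-2\eps_1-2\eps_2 - 2(\la-\eps_1-\eps_2) = 2\la$ meeting $D$ in one point along the dual class $\rho^*$. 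That $\Ga_1$-contribution is exactly a relative invariant of type $\bla pt,pt,\pi,\pi,\pi\,|\,\rho^*,(\cdots)\bra^{X,D}_{0,2\la,(1,1)}$ --- the sort computed in Lemma~\ref{le:P4}(ii), which gave $8$. Summing $\sum_\rho \frac{\rho\cdot_D\al}{4}\cdot(\text{matching }\Ga_1\text{ invariant})$ produces a nonzero extra contribution, so the two invariants cannot be equal.

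\textbf{Main obstacle.} The hard part is the bookkeeping of the decomposition formula: checking that the term described above is the \emph{only} extra nonzero term (or, at least, that the full collection of extra terms does not cancel), and matching up all the relative tails, dual bases, and multiplicities $n_{\Ga,\un d}$. One must rule out other splittings $(\be_1,\be_2)$ --- e.g. $\be_2$ a single fiber (forced to $d_i=1$ by Lemma~\ref{le:relD}(ii), hence part of the ``main'' term once summed over all $d=\be\cdot D=0$... but $\be\cdot D=0$ so there are no fibers at all here), or $\be_2$ containing components in classes $p\eps_j$ (ruled out because $\pi$ can be represented disjointly from the $\eps_j$-curves, as in the footnote to Lemma~\ref{le:P4}), or $\be_2$ containing several $\al'$-type components (dimension-excluded). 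Once these are eliminated, one is left with a clean identity of the form $\bla pt,pt,\pi,\pi,\pi\bra^X_{\be} = \bla pt,pt,\pi,\pi,\pi|\;\bra^{X,D}_{\be} + (\text{positive correction})$, where the correction is assembled from Example~\ref{ex:relD} and Lemma~\ref{le:P4}; I expect the correction to work out to something like $8\cdot\frac{1}{4}\cdot(\text{number of dual pairings}) > 0$, giving the strict inequality. The dimension count ensuring each factor is an actual number (rather than set to zero) should be checked but is routine given equation~(\ref{eq:dim}).
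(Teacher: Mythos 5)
Your strategy is in essence the paper's: decompose $X$ as the sum of $(X,D)$ with $(Y,D^+)$, peel off the $\Ga_2=\emptyset$ term, and identify the extra contribution built from $\bla pt,pt,\pi,\pi,\pi\,|\,\la,D\bra^{X,D}_{0,2\la,(1,1)}=8$ (Lemma~\ref{le:P4}(ii)) and the $1/4$ of Example~\ref{ex:relD}; that term does contribute $2$. (One structural correction: since $\be_1=2\la$ has $\be_1\cdot D=2$, the $\Ga_1$-curve carries \emph{two} relative tails, so the matching $\Ga_2$ must be disconnected in class $\be_2=2\al'+2f$, the second tail being a fiber through the constraint $pt$ dual to $D$; the $(2\al'+f)$-component alone accounts for only one tail. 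Also your opening attempt to evaluate the absolute invariant separately via Lemma~\ref{le:X} is a detour --- as you note, that lemma does not cover $4\la-2\eps_1-2\eps_2$ --- and the paper never needs the absolute number: it works only with the difference.)

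The genuine gap is exactly the step you label the ``main obstacle'' and then defer: showing that the term above is the \emph{only} nonzero term with $\Ga_2\ne\emptyset$ is the bulk of the proof, and your sketch lacks the ideas needed to carry it out. First, cancellation is a real threat, not a formality: in the sum over the self-dual basis the tails $(\eps_j,D)$ enter with $\eps_j^*\cdot_D\al=-1$, so the correction is not ``positive'' by inspection; one must prove that the matching factors $\bla pt,pt,\pi,\pi,\pi\,|\,\eps_j,D\bra^{X,D}_{0,2\la,(1,1)}$ vanish (conics in class $2\la$ miss $D\cap E_j^X$), and that the $(pt,D)$ tails die on the $\Ga_2$ side because the absolute constraint $\pi^{-1}(\la')$ can be chosen disjoint from every $\C^*$-invariant representative of $f+2\al'$; the disconnected $\Ga_1$ with two $\la$-components must be excluded as well. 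Second, ruling out all other splittings $(\be_1,\be_2)$ requires, beyond Lemma~\ref{le:relD}, a geometric classification of which classes $s\la-m_1\eps_1-m_2\eps_2$ are represented by holomorphic spheres in $D=\PP^3\#2\ov{\PP}\,\!^3$ (either $m_1=m_2=s$, or $m_j\ge0$ with $m_1+m_2\le s$); this forces every $\Ga_2$-component with $\be_{2,i}\cdot D'<0$ to contain an $s_i\al$-sphere with $s_i\ge2$, after which the two point constraints give $\sum_i s_i\le 3$, and the case $\sum_i s_i=3$ is excluded because the resulting $\Ga_2$ cannot meet the absolute constraints. Your phrases ``dimension-excluded'' and ``$\pi$ avoids the $\eps_j$-curves'' do not cover these cases, so as written the claimed inequality is not yet established.
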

\begin{proof} The difference between these invariants is 
$$
\sum_{\be_2\ne0, b_i, a_j,\un d}
\bla pt,pt,a_1,\dots, a_q|b_1,\dots,b_r\bra^{\Ga_1,X,D}_{\be_1,\un d}
\bla a_1',\dots, a_{q'}'\;|b_1^*,\dots,b_r^*\bra^{\Ga_2,Y,D^+}_{\be_2,\un d}
$$
where we sum over all possible relative constraints and all placings of the
absolute constraints $\pi, \pi,\pi$ (which we decompose as in 
Lemma~\ref{le:relD}). Thus $q+q' = 3$.   

There is
one nonzero term in this sum with $\be_1=2\la$ and $\be_2=2\al'+2f$.
Here we take the $\Ga_1$ curve to be  a connected $2\la$
 curve through all the
absolute constraints and the relative constraints $b_1:=D, b_2:=\la$,
and the $\Ga_2$ curve to be disconnected, with one component equal to a fiber through a point in $b_1^*=pt$ and the other a sphere 
in class $2\al'+f$ and through
the relative constraint $b_2^*=:\pi$.  The $\Ga_2$ factor equals $1/4$ by
Example~\ref{ex:relD}.  
Further,
by Lemma~\ref{le:P4}
$$
\bla pt,pt,\pi,\pi,\pi\,|\,\la,D\bra^{X,D}_{0,2\la,(1,1)} =8.
$$
Therefore this term contributes $2$ to the sum.

Since curves in class $2\al$ do not meet $D\cap E_j$, $$
\bla pt,pt,\pi,\pi,\pi\,|\,\eps_j,D\bra^{X,D}_{0,2\la,(1,1)}=0, \quad j=1,2,
$$
where we have used the notation of equation (\ref{eq:bas}).
  Moreover, although
 $$
\bla pt,pt,\pi,\pi\,|\,pt,D\bra^{X,D}_{0,2\la,(1,1)}=
\bla pt,pt,\pi,\pi, pt\bra^{X}_{0,2\la}=1,
$$
the corresponding $\Ga_2$ invariant vanishes.  For, this would have to
consist of a fiber together with a connected curve in class $f+2\al'$.
This is nonzero only if
there is a $\C^*$ invariant representative of $f+2\al'$
 through the remaining absolute constraint. But 
 we can arrange that there are no such curves:
 by Remark \ref{rmk:DF} (iii) 
the constraint has the form $\pi^{-1}(\la')$, where $\la'$ is a line in $D'$ that can certainly be chosen disjoint from the $\al'$ curve.
A similar argument rules out the case when $\Ga_1$ consists of
two components each in class $\la$, since such $\Ga_1$ cannot go through all the absolute constraints.\footnote
{
It is possible to use up two absolute constraints because
 $$
\bla pt, \pi,\pi\,|\,\pi\bra^{X,D}_{0,\la} = 
\bla pt, \pi,\pi,\pi\bra^{\PP^4}_{0,\la}=
\bla pt, \pi,\la,\la\bra^{\PP^3}_{0,\la}=1.
$$
}
Therefore there are no other contributions with
$\Ga_1$ a connected curve in class $2\la$.

We now show that there are no other nonzero terms in the sum. To
see this,  
consider the possible choices for $\be_2$.  By Lemma~\ref{le:relD},
each connected component of a nonzero $\Ga_2$  factor must either lie in class $f$ or in some class $\be_{2,i}$ such that $\be_{2,i}\cdot D'<0$. 
Since $\be\cdot D < 0$ there must be at least one term of the latter form.

Consider such a term and
 write  $\be_{2,i} = k_i f + s_i'\la' - m_{i1}'\eps_1'-m_ {i2}'\eps_2'$.  Then since $k_i\ge 1$ we need  
$s_i'-m_{i1}'-m_{i2}'\le -2$.
Since the class $s_i'\la- m_{i1}'\eps_1-m_{i2}'\eps_2$ must be representable
in $D$ we need either $s_i'>0$ or  $s_i'=0$ and  $m_{i1}'\le 0$.
 But the latter possibility makes $s_i'-m_{i1}'-m_{i2}'\ge0$.
Therefore $s_i'>0$.   

We next claim:

\begin{quote}
{\it if the class $s\la-m_1\eps_1-m_2\eps_2$ with $s>0$ is represented by a holomorphic sphere $S$ in $D$ then either $m_1=m_2=s$ or
$0\le m_j$ and $m_1+m_2\le s$.}
\end{quote}

\NI {\it Proof of Claim:} In the notation of Example \ref{ex:relD},
 $$
 (s\la-m_1\eps_1-m_2\eps_2)\cdot_D(H-E_1-E_2) = s-m_1-m_2.
 $$
If this is negative then $S$ lies entirely in the plane
$H-E_1-E_2\cong \PP^2\#2{\ov \PP}\,\!^2$.  Hence it must either be a multiple of the unique curve in class $\al = \la-\eps_1-\eps_2$ or have nonnegative intersection with it. The claim follows.\SSS

The connected component $\be_{2,i}$ might be a union of spheres,
but, by the claim, the homology class of each such sphere
is  $s_i\al, m\eps_j $ or $s\la-m_1\eps_1-m_2\eps_2$ with $s\ge m_1+m_2\ge0$.
Since the classes $s_i\al$ are the only ones with negative intersection with $D'$ there must be at least one of these. Moreover, we need $s_i\ge 2$ for the intersection to be negative and also $\sum_i s_i\le 3$ since the $\Ga_1$ component 
goes through the two point constraints.  

If  $\sum_i s_i=3$,
then the $\Ga_1$ component must be
$$
\bla pt, pt\,|\,D\bra^{X,D}_{0,\la}.
$$
Therefore the $\Ga_2$ component is connected and must go through the
absolute constraints.  As noted above, these have the form $\pi^{-1}(\la')$ where we may take $\la'\subset D'$ to be
 disjoint from the exceptional divisors $E_1', E_2'$.
Further $\be_2=f+3\al+\eps_1+\eps_2$. Therefore its only holomorphic and $\C^*$ invariant representatives consist of the union of a fiber with a triply covered $\al$-curve and some curves in $E_1'\cup E_2'$.
Since these do not meet the absolute constraints, there are no contributions of this form.

Therefore $\Ga_2$ has just one nonfiber component, and this must lie in class $kf+2\al$. Hence $\be_1=2\la$. But we have already seen that there is only one term of this form.  This completes the proof. 
\end{proof}

%


%
 
\begin{rmk}\rm One could calculate $\bla pt,pt,\pi,\pi,\pi\bra^{X}_{\be}$
by Gathmann's algorithm, but the argument is quite complicated and is unnecessary for the present purpose.  Note that the reason why
the two invariants differ in the case considered here is precisely the same as the reason why many of the invariants considered by  Gathmann are not enumerative, namely the class $\be$ can be represented by stable maps 
with components in $D$ that are regular in $D$ but not in $X$.
\end{rmk}

\end{document}